\DeclareSymbolFontAlphabet{\mathbb}{AMSb}
\DeclareSymbolFontAlphabet{\mathbbl}{bbold}
\def\YR{\ensuremath\mathrm{YR}}
\newcommand{\rfc}{\mathrm{BRF}}
\newcommand{\rf}{\mathrm{RF}}
\newcommand{\SSYT}{\ensuremath\mathrm{SSYT}}
\newcommand{\SSKT}{\ensuremath\mathrm{SSKT}}
\definecolor{darkred}{rgb}{0.7,0,0} 
\newcommand{\defn}[1]{{\color{darkred}\emph{#1}}} 
\renewcommand{\mid}{:}
\numberwithin{equation}{section}
\theoremstyle{definition}
\newtheorem* {theorem*}{Theorem}
\newtheorem* {corollary*}{Corollary}
\newtheorem* {conjecture*}{Conjecture}
\newtheorem{theorem}{Theorem}[section]
\theoremstyle{definition}
\newtheorem* {example*}{Example}
\newtheorem{lemma}[theorem]{Lemma}
\theoremstyle{definition}
\newtheorem{definition}[theorem]{Definition}
\theoremstyle{definition}
\newtheorem{proposition}[theorem]{Proposition}
\newtheorem{corollary}[theorem]{Corollary}
\newtheorem{remark}[theorem]{Remark}
\newtheorem*{remark*}{Remark}
\theoremstyle{definition}
\newtheorem {example}[theorem]{Example}
\theoremstyle{definition}
\theoremstyle{definition}
\theoremstyle{definition}
\newcommand{\rowreading}{\mathsf{row}}
\newcommand{\shift}{\mathsf{shift}}
\newcommand{\weakdestab}{\mathsf{WeakDesTab}}
\def\({\left(}
\def\){\right)}
\newcommand{\cC}{\mathcal{C}}
\def\ZZ{\mathbb{Z}}
\def\ch{\operatorname{ch}}
\def\sh{\mathrm{sh}}
\def\fk{\mathfrak}
\def\barr{\begin{array}}
\def\earr{\end{array}}
\def\ba{\begin{aligned}}
\def\ea{\end{aligned}}
\def\be{\begin{equation}}
\def\ee{\end{equation}}
\def\qquand{\qquad\text{and}\qquad}
\def\quand{\quad\text{and}\quad}
\newcommand{\gl}{\mathfrak{gl}}
\def\id{\mathrm{id}}
\def\ben{\begin{enumerate}}
\def\een{\end{enumerate}}
\def\bei{\begin{itemize}}
\def\eei{\end{itemize}}
\def\des{\mathrm{des}}
\def\e{\textbf{e}}
\newcommand{\stdk}{\mathsf{std_{key}}}
\newcommand{\cB}{\mathcal{B}}
\def\arcstart{\ \xy<0cm,-.15cm>\xymatrix@R=.1cm@C=.3cm }
\newcommand{\arcstartc}[1]{\ \xy<0cm,-.15cm>\xymatrix@R=.1cm@C=#1cm}
\def\sW{\mathscr{W}}
\def\ba{\mathbf{a}}
\newcommand{\row}{\operatorname{row}}
\newcommand{\weight}{\mathsf{wt}}
\def\row{\textsf{row}}
\def\path{\mathsf{path}_{\textsf{OEG}}}
\def\whSym
\def\whQSym
\def\fkD{\fk D}
\def\rf{\mathrm{RF}}
\numberwithin{equation}{section}
\def\lift{\mathsf{lift}}
\lstdefinelanguage{Sage}[]{Python}
{morekeywords={False,sage,True},sensitive=true}
\definecolor{dblackcolor}{rgb}{0.0,0.0,0.0}
\definecolor{dbluecolor}{rgb}{0.01,0.02,0.7}
\definecolor{dgreencolor}{rgb}{0.2,0.4,0.0}
\definecolor{dgraycolor}{rgb}{0.30,0.3,0.30}
\begin{document}
\title{Demazure crystals for flagged key polynomials}
\author{Jiayi Wen \\ Department of Mathematics \\ Hong Kong University of Science and Technology \\ {\tt jiayi.wen@connect.ust.hk
}}

\date{}

\maketitle

\begin{abstract} 
    One definition of key polynomials is as the weight generating functions of key tableaux. 
    Assaf and Schilling introduced a crystal structure on key tableaux and related it to Morse--Schilling crystals on reduced factorizations for permutations via weak Edelman--Greene insertion. In this paper, we consider generalizations of both crystals depending on a flag. We extend weak EG insertion to a bijection between our flagged objects and show that the recording tableau gives a crystal isomorphism. As an application, we show that flagged key tableaux have a natural Demazure crystal structure, whose characters recover Reiner and Shimozono's flagged key polynomials.
\end{abstract}

\section{Introduction}

\defn{Schur polynomials} $s_\lambda$, indexed by partitions, are the weight generating functions for
\defn{semistandard Young tableaux} of shape $\lambda$.
As $\lambda$ varies over all partitions with at most $n$ parts, $\{s_\lambda\}$ forms a $\ZZ$-basis for the ring of
symmetric polynomials in $n$ variables. In representation theory, Schur polynomials are characters
of irreducible polynomial representations of the general linear groups.
\defn{Key polynomials} $\kappa_\alpha$, indexed by weak compositions, are the characters of Demazure modules, which are representations of Borel subgroups of the general linear groups \cite{Dem}. 
Key polynomials are nonsymmetric generalizations of Schur polynomials, and
$\{\kappa_\alpha\}$
forms a basis for $\ZZ[x_1,x_2,\dots]$ as $\alpha$ ranges over all weak compositions.
Lascoux and Sch\"utzenberger proved that key polynomials are
weight generating functions of various
combinatorial objects \cite{LS2}.

Key polynomials are also related to other important polynomials in combinatorics.
For example, \defn{Schubert polynomials} are
representatives of the Schubert classes in the cohomology ring of the complete flag variety \cite{LS}. Schubert polynomials also generalize Schur polynomials in the sense that Schur polynomials are representatives of the Schubert classes indexed by \defn{Grassmannian} permutations.
Schubert polynomials are also weight generating functions for \defn{bounded reduced factorizations},
which are certain generalizations of reduced words for a permutation \cite{BJS, FS}; see \S\ref{section 3} for more details.
Lascoux and Sch\"utzenberger proved that Schubert polynomials expand as non-negative integer linear combinations of key polynomials; see \cite[Thm.~3]{RS}.

Assaf generalized semistandard Young tableaux to provide another combinatorial formula of key polynomials \cite{Assaf18}.
Namely, $\kappa_\alpha$ is the weight generating function for \defn{(semistandard) key tableaux} of shape $\alpha$.
In the later paper \cite{Assaf19}, Assaf introduced a \defn{weak Edelman--Greene (EG) insertion algorithm}
to give a new bijective proof of the key positivity of Schubert polynomials.
Assaf's algorithm is based on the \defn{Edelman--Greene insertion}, which Edelman and Greene introduced in \cite{EG} to prove that \defn{Stanley symmetric functions} \cite{S} have non-negative integer coefficients in the Schur basis.

Let $\rf_n(w)$ be the set of all reduced factorizations for a permutation $w$ with exactly $n$ components.
In \cite{MS}, Morse and Schilling defined a $\gl_n$-crystal 
structure on $\rf_n(w)$,
which they related to a crystal structure on semistandard Young tableaux through EG insertion.
In \cite{AS}, Assaf and Schilling defined a crystal on key tableaux
and proved that $\SSKT(\alpha)$, the set of all key tableaux with shape $\alpha$,
is the Demazure truncation of the highest weight crystal of the highest weight $\lambda$,
where $\lambda$ is the partition rearrangement of $\alpha$. 
Then they proved that the recording tableau for weak EG insertion
gives a crystal isomorphism between $\rfc(w)$ and $\bigsqcup_\alpha\SSKT(\alpha)$,
where $\rfc(w) \subseteq \rf_n(w)$ is the subset of reduced factorizations that are bounded in the sense of \ref{rfc-def}.

A \defn{flag} is a weakly increasing function $\varphi:\ZZ_{>0}\to \ZZ_{>0}$ such that $\varphi(n)\geq n$ for all $n\in\ZZ_{>0}$.
If $\varphi(i) =i$ for all $i$, then we say that $\varphi$ is the \defn{standard flag}.
In both $\rfc(w)$ and $\SSKT(\alpha)$, the entries in the $i$-th part of the elements are bounded by $i$ which is the $i$-th value of the standard flag.
A natural way of generalizing these sets is to replace the standard flag by an arbitrary flag.
In a \defn{$\varphi$-flagged reduced factorization},
a given letter $i$ may appear in the first $\varphi(i)$ parts instead of the first $i$ parts.
The $i$-th row of a \defn{$\varphi$-flagged key tableau} is bounded above by $\varphi(i)$ instead of $i$.
We denote these $\varphi$-flagged generalizations as $\rfc(w,\varphi)$ and $\SSKT(\alpha,\varphi)$, respectively. 

Our main results concern crystal structures on these sets. 
Let $\varphi$ be any flag.
The set $\rfc(w,\varphi)$ inherits a crystal structure from \cite{MS}, and we show how to 
extend the existing crystal structure on $\SSKT(\alpha)$ to the larger set $\SSKT(\alpha,\varphi)$. 
Assaf's weak EG insertion algorithm can be evaluated on flagged factorizations, and we are able to prove the following generalization of \cite[Thm.~5.10]{AS}:

\begin{proposition}[{See Propositions~\ref{flagged bijection} and \ref{cor 4.6}}]
\label{theorem1}
    The weak EG insertion algorithm induces a crystal isomorphism from $\rfc(w,\varphi)$ to $\bigsqcup_\alpha \SSKT(\alpha,\varphi)$.

\end{proposition}

One of our other main results gives an explicit relation between the crystals $\rfc(w,\varphi)$ and  $\rfc(w)$.
Let $\cB$ be any crystal with raising operators $e_i:\cB\to \cB\sqcup\{0\}$.
The $i$-th \defn{Demazure operator} $\fkD^{\cB}_{i}$ acts on subsets $X\subseteq \cB$ by  
$\fkD^{\cB}_i X:=\{b\in \cB\mid e_i^k(b)\in X\text{ for some }k\geq 0\}$.
When $\cB=\rf_n(w)$,
applying arbitrary sequences of these operators to highest weight elements  
generates a family of subcrystals called \defn{Demazure crystals}; see Definition~\ref{dem-def}.

\begin{theorem}[{See Theorem~\ref{thm 44}}]\label{theorem2}
  Let $n$ be sufficiently large such that $\rfc(w,\varphi)\subseteq \rf_n(w)$.
   Then $\rfc(w,\varphi)=\fkD_{i_1}^{\rf_n(w)}\cdots \fkD_{i_k}^{\rf_n(w)}\rfc(w)$ for certain 
    $i_1,\dots,i_k\in \ZZ_{>0}$ depending only on $\varphi$.
\end{theorem}

    \begin{corollary}\label{cor 1.3}
  Each $\rfc(w,\varphi)$ is a disjoint union of Demazure crystals. 
    \end{corollary}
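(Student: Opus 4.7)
The plan is to combine Theorem~\ref{theorem2} with the standard-flag instance of Theorem~\ref{theorem1}. The latter (which specializes to the Assaf--Schilling result) identifies $\rfc(w)$ with $\bigsqcup_\alpha \SSKT(\alpha)$ as $\gl_n$-crystals via weak EG; since each $\SSKT(\alpha)$ is a Demazure crystal, transporting along this isomorphism gives a decomposition $\rfc(w)=\bigsqcup_\alpha \rfc_\alpha(w)$ inside $\rf_n(w)$, with each summand $\rfc_\alpha(w)$ already being a Demazure crystal.

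Next I would apply Theorem~\ref{theorem2} together with the fact that $\fkD_i$ distributes over unions (immediate from~(\ref{fkD-eq})) to obtain
\[
\rfc(w,\varphi)\;=\;\fkD_{i_1}\fkD_{i_2}\cdots \fkD_{i_k}\,\rfc(w)\;=\;\bigcup_\alpha \fkD_{i_1}\fkD_{i_2}\cdots \fkD_{i_k}\,\rfc_\alpha(w).
\]
Each summand on the right is again a Demazure crystal, directly from Definition~\ref{dem-def}: applying further Demazure operators to a Demazure crystal yields another, since by construction $\rfc_\alpha(w)$ is itself obtained by applying Demazure operators to a highest weight element.

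It remains to verify disjointness. The key observation is that $\fkD_i$ cannot move elements between connected components of $\rf_n(w)$: from~(\ref{fkD-eq}), if $e_i^k(b)\in X$ for some $k\geq 0$ then $b$ lies in the same connected component as $e_i^k(b)$, hence as some element of $X$. Since the Demazure crystals $\rfc_\alpha(w)$ contain distinct highest weight elements, they occupy distinct connected components of $\rf_n(w)$; iterating, the sets $\fkD_{i_1}\cdots \fkD_{i_k}\,\rfc_\alpha(w)$ also sit in pairwise distinct connected components, so they are pairwise disjoint.

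The main obstacle I anticipate is this disjointness step, which relies on two facts that must be invoked explicitly: that distinct $\alpha$'s produce distinct connected components of $\rf_n(w)$ (even when two weak compositions share the same partition rearrangement, weak EG places them in separate components with distinct highest weight representatives), and that $\fkD_i$ genuinely respects the connected-component decomposition. Both follow quickly from~(\ref{fkD-eq}) together with the standard-flag case of Theorem~\ref{theorem1}, after which Corollary~\ref{cor 1.3} is immediate.
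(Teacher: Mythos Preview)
Your approach is correct and is essentially the one the paper intends: the corollary is stated immediately after Theorem~\ref{theorem2} with no separate proof, the implicit argument being exactly to combine Theorem~\ref{theorem2} with the Assaf--Schilling result (cited just before Theorem~\ref{thm 44}) that $\rfc(w)$ is already a disjoint union of Demazure crystals, and then push forward along the Demazure operators. Two small points worth tightening: first, the pieces of $\rfc(w)$ are naturally indexed by Yamanouchi tableaux $T\in\YR_\alpha(w)$ rather than by $\alpha$ alone (there can be several $T$ with the same shape $\alpha$), and the disjointness you want comes from the fact that $\hat P$ is constant on $e_i$-strings, hence on connected components; second, the step ``applying further $\fkD_i$'s to a Demazure crystal gives a Demazure crystal'' uses that the $\fkD_i$, when applied to a highest-weight singleton, satisfy the $0$-Hecke relations ($\fkD_i^2=\fkD_i$ together with the braid relations from \cite[Thm.~13.5]{BS}), so any product collapses to some $\fkD_\tau$.
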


    It is shown in \cite{AS} that $\SSKT(\alpha)$ is a Demazure crystal. We can upgrade this to any flag:

\begin{theorem}[{See Theorem~\ref{demazure crystal sskt}}]\label{thm1.3}
Each $\SSKT(\alpha,\varphi)$ is a Demazure crystal.
\end{theorem}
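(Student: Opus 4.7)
The plan is to prove Theorem~\ref{thm1.3} by transporting the Demazure description of $\rfc(w,\varphi)$ provided by Theorem~\ref{theorem2} to the tableau side via the crystal isomorphism of Theorem~\ref{theorem1}, thereby reducing the claim to the (standard-flag) Assaf--Schilling result that each $\SSKT(\alpha)$ is already a Demazure crystal.

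Fix a weak composition $\alpha$. Choose $n$ and a permutation $w$ such that, under the unflagged crystal isomorphism $\rfc(w) \cong \bigsqcup_\beta \SSKT(\beta)$ of \cite{AS}, the target contains the chosen $\SSKT(\alpha)$ as the image of some subset $\rfc(w) \cap \cB_P \subseteq \rf_n(w)$, where $\cB_P$ denotes the connected component of $\rf_n(w)$ indexed by an insertion tableau $P$ of appropriate shape. The Assaf--Schilling result then says $\rfc(w) \cap \cB_P$ is a Demazure subcrystal of $\cB_P$.

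Next apply Theorem~\ref{theorem2} to write $\rfc(w,\varphi) = \fkD_{i_1}\fkD_{i_2}\cdots\fkD_{i_k}\,\rfc(w)$ for a sequence $i_1,\dots,i_k$ depending only on $\varphi$. Since the raising operators $e_i$ stay within connected components of $\rf_n(w)$, the Demazure operators $\fkD_i$ defined in \eqref{fkD-eq} do as well, so intersecting with $\cB_P$ gives
\[
\rfc(w,\varphi) \cap \cB_P \;=\; \fkD_{i_1}\fkD_{i_2}\cdots\fkD_{i_k}\bigl(\rfc(w) \cap \cB_P\bigr).
\]
Applying a further Demazure operator $\fkD_i$ to a Demazure crystal $\fkD_{j_1}\cdots\fkD_{j_m}\{u\}$ yields another Demazure crystal of the same form, so the right-hand side is a Demazure crystal in $\cB_P$. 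Now Theorem~\ref{theorem1} restricts to a crystal isomorphism between $\rfc(w,\varphi) \cap \cB_P$ and $\SSKT(\alpha,\varphi)$, and a crystal isomorphism commutes with the $e_i$ and hence with the $\fkD_i$, so the Demazure-crystal structure is preserved. Therefore $\SSKT(\alpha,\varphi)$ is a Demazure crystal.

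The most delicate technical point I anticipate is pinning down the intended ambient crystal for $\SSKT(\alpha,\varphi)$ in the statement of Theorem~\ref{thm1.3} --- presumably the highest weight crystal $\mathrm{B}(\lambda)$ for $\lambda$ the partition rearrangement of $\alpha$, as in the Assaf--Schilling set-up --- and then checking that the crystal isomorphism of Theorem~\ref{theorem1} identifies $\cB_P$ with (a subset of) that ambient in a way that transports the Demazure description faithfully. A secondary point, more bookkeeping than conceptual, is verifying that the sequence $(i_1,\dots,i_k)$ from Theorem~\ref{theorem2} really acts uniformly across the different connected components indexed by the various $\alpha$, so that the argument covers every weak composition simultaneously.
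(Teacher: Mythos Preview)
Your proposal is correct and is essentially the same argument the paper gives: the paper chooses $w$ so that $\YR_\alpha(w)\neq\varnothing$ (via the key expansion of a Schubert polynomial), defines $\cC=\{\rho^{(\bullet)}\in\rfc(w):\hat P(\rho^{(\bullet)})=T\}$ and $\cC'=\{\rho^{(\bullet)}\in\rfc(w,\varphi):\hat P(\rho^{(\bullet)})=T\}$ --- exactly your $\rfc(w)\cap\cB_P$ and $\rfc(w,\varphi)\cap\cB_P$ --- then uses Corollary~\ref{corollary 4.5} to write $\cC'=\fkD_{t_1\downarrow 1}\cdots\fkD_{t_n\downarrow n}\cC$ and the Assaf--Schilling result to conclude $\cC'$ is Demazure. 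Your anticipated technical worries are resolved exactly as you suggest: the ambient crystal is the connected component of $\rf_n(w)$ (equivalently $B(\lambda)$ for $\lambda$ the partition rearrangement of $\alpha$), and the Demazure sequence from Theorem~\ref{theorem2} depends only on $\varphi$ and $n$, so it acts uniformly on components.
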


Taking the character of $\SSKT(\alpha,\varphi)$ gives a flagged generalization of key polynomials. One reason that our flagged constructions are natural to consider is that 
such \defn{flagged key polynomials} have already appeared in the literature in a different form. 
Reiner and Shimozono considered flagged key polynomials in the context of a flagged Littlewood-Richardson rule \cite[Thm.~20]{RS}.
Their original definition does not involve key tableaux; however, we can prove the following:

\begin{theorem}
\label{thm 1.4}
    Reiner and Shimozono's flagged key polynomial $\kappa_{(\alpha,\varphi)}$ from \cite{RS} is equal to $\sum_{T\in \SSKT(\alpha,\varphi)}x^{\weight(T)}$.  
\end{theorem}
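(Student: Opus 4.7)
The plan is to identify both sides of the claimed equality as characters of Demazure crystals obtained by applying matching sequences of Demazure operators, and then invoke the standard fact that a Demazure crystal's character equals the corresponding Demazure-operator word applied to a dominant monomial. By Theorem~\ref{thm1.3}, $\SSKT(\alpha,\varphi)$ is a Demazure crystal, so its character
\[
   \sum_{T\in\SSKT(\alpha,\varphi)} x^{\weight(T)}
\]
is automatically of the form $\pi_{i_1}\pi_{i_2}\cdots\pi_{i_k}(x^\lambda)$, where $\lambda$ is the partition rearrangement of $\alpha$ and $\pi_i$ is the $i$-th isobaric divided difference operator. The sequence $(i_1,\ldots,i_k)$ is determined by the proof of Theorem~\ref{thm1.3}, which in turn is controlled by Theorem~\ref{theorem2} for $\rfc(w,\varphi)$ transferred through the weak EG bijection of Theorem~\ref{theorem1}. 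Hence I would first write down this sequence explicitly from the construction of $\rfc(w,\varphi)=\fkD_{i_1}\cdots\fkD_{i_k}\rfc(w)$ and record the resulting character identity.

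Next I would turn to Reiner and Shimozono's original definition, reviewed in \S\ref{4.3}. Their $\kappa_{(\alpha,\varphi)}$ is built out of divided difference operators acting on $x^\lambda$, indexed in a specific way by the flag $\varphi$. The goal of this step is to recognize their construction as giving a word $\pi_{j_1}\pi_{j_2}\cdots\pi_{j_m}(x^\lambda)$. For the standard flag both sides collapse to $\kappa_\alpha$ via Assaf's original characterization of key polynomials as weight generating functions over $\SSKT(\alpha)$, giving the base case.

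To carry out the identification, I would proceed by induction on $\sum_i(\varphi(i)-i)$, treating $\varphi$ as built from the standard flag by a sequence of unit increments $\varphi(i)\mapsto \varphi(i)+1$. For each such increment, on the Reiner--Shimozono side a single new Demazure operator is appended (this follows from their definition), and on the key-tableau side Theorem~\ref{thm1.3} combined with Theorem~\ref{theorem2} should produce exactly the same operator, because the constructions of $\rfc(w,\varphi)$ and $\SSKT(\alpha,\varphi)$ depend on $\varphi$ only through the same inductive prescription. The main obstacle is bookkeeping: the two sides may not produce literally the same word in the $\pi_i$'s, and one may need the relations $\pi_i^2=\pi_i$ and $\pi_i\pi_j=\pi_j\pi_i$ for $|i-j|>1$ (and occasionally the braid relation) to reconcile the orders. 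Once this matching is verified for a single incremental step, the inductive hypothesis closes the proof.
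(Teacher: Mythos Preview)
Your overall plan---reduce to the standard flag by an induction on the flag and match Demazure operators at each step---is in the same spirit as the paper's argument, but there is a genuine gap in how you handle the Reiner--Shimozono side.

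The issue is your assumption that ``their $\kappa_{(\alpha,\varphi)}$ is built out of divided difference operators acting on $x^\lambda$'' and that ``on the Reiner--Shimozono side a single new Demazure operator is appended (this follows from their definition)''. This is not how Reiner and Shimozono define the flagged key polynomial. As reviewed in \S\ref{4.3}, their definition is purely combinatorial: $\tilde\kappa_{(\alpha,\varphi)}=\sum_{u^\bullet\in\sW(\alpha,\varphi)}x^{\weight(u^\bullet)}$, where $\sW(\alpha,\varphi)$ is a set of $\varphi$-flagged maximal factorizations whose column-insertion recording tableau equals $\std(\key(\alpha))$. No $\pi_i$ appears in this definition, so nothing about appending a Demazure operator ``follows from their definition''. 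Your inductive step therefore has no content on the RS side until you supply a separate argument that incrementing the flag corresponds to applying a single $\pi_j$ to $\tilde\kappa_{(\alpha,\varphi)}$---and that is essentially the whole theorem.

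The paper closes this gap differently. It first proves (Theorem~\ref{demazure crystal sskt}) that the key-tableau generating function satisfies the explicit formula $\kappa_{(\alpha,\varphi)}=\pi_{\varphi(1)\downarrow 1}\cdots\pi_{\varphi(k)\downarrow k}(\kappa_\alpha)$, and from this derives a recursion (Theorem~\ref{recursion}) in which passing from $\varphi$ to $\varphi-\mathbf{e}_i$ may also change $\alpha$ to $\alpha\cdot s_i$. The matching recursion on the RS side does \emph{not} come from their definition but from bijections that Reiner and Shimozono themselves established in the proof of \cite[Thm.~21]{RS}: they show $\sW(\alpha,\varphi)=\sW(\alpha,\varphi-\mathbf{e}_i)$ when $\alpha_i\le\alpha_{i+1}$ and exhibit a weight-preserving bijection $\sW(\alpha,\varphi)\to\sW(\alpha\cdot s_i,\varphi-\mathbf{e}_i)$ when $\alpha_i>\alpha_{i+1}$. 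That external input is what makes the induction go through; your proposal needs to invoke something equivalent rather than assume RS's polynomial is already presented in Demazure form.
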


Reiner and Shimozono observed that  $\kappa_{(\alpha,\varphi)}=\kappa_\beta$ for 
some $\beta$. Our results imply a crystal analog:

\begin{corollary}[{See Corollary~\ref{thm 4.15}}]
    The Demazure crystal $\SSKT(\alpha,\varphi)\cong\SSKT(\beta)$ for some $\beta$.
\end{corollary}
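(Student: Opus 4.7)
The plan is to combine the preceding results with the fact that a Demazure crystal is determined up to isomorphism by its character. By Theorem~\ref{thm1.3}, $\SSKT(\alpha,\varphi)$ is a Demazure crystal whose character, by Theorem~\ref{thm 1.4}, equals Reiner and Shimozono's flagged key polynomial $\kappa_{(\alpha,\varphi)}$. By their observation, $\kappa_{(\alpha,\varphi)} = \kappa_\beta$ for some weak composition $\beta$. On the other hand, $\SSKT(\beta)$ is itself a Demazure crystal (this is the Assaf--Schilling theorem, i.e.\ the standard-flag case of Theorem~\ref{thm1.3}), with character $\kappa_\beta$ by the defining property of key tableaux. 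Hence $\SSKT(\alpha,\varphi)$ and $\SSKT(\beta)$ are two Demazure crystals with the same character.

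The remaining step is to invoke the uniqueness principle: Demazure subcrystals of a highest-weight $\gl_n$-crystal $B(\lambda)$ are in bijection with weak compositions $\gamma$ rearranging to $\lambda$, and this bijection sends a subcrystal to its character $\kappa_\gamma$. Since $\{\kappa_\gamma\}$ forms a $\ZZ$-basis of $\ZZ[x_1,x_2,\dots]$, distinct $\gamma$ produce distinct characters, so the isomorphism class of a Demazure crystal is determined by its character. Applied to our common character $\kappa_\beta$ (with $\lambda$ the partition rearrangement of $\beta$ and $n$ sufficiently large), this forces $\SSKT(\alpha,\varphi) \cong \SSKT(\beta)$ as abstract crystals.

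The main subtlety is arranging for both crystals to sit inside a common ambient highest-weight crystal so that the uniqueness principle applies. One clean route is to transport the problem via Theorem~\ref{theorem1} to the reduced-factorization side, where Corollary~\ref{cor 1.3} already presents $\rfc(w,\varphi)$ as a disjoint union of Demazure subcrystals inside a common ambient $\rf_n(w)$; the character computation above then takes place component-by-component. Alternatively, one can pick $n$ large enough that both $\SSKT(\alpha,\varphi)$ and $\SSKT(\beta)$ embed directly as Demazure subcrystals of the same $\gl_n$-highest-weight crystal of shape $\lambda$. Either way, once a common ambient is fixed, the corollary is immediate from the uniqueness statement.
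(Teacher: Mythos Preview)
Your proposal is correct and matches the paper's approach: both argue that $\SSKT(\alpha,\varphi)$ and $\SSKT(\beta)$ are Demazure crystals with the same character $\kappa_\beta$, then invoke the standard fact (cited in the paper as \cite{K}) that a Demazure crystal is determined up to isomorphism by its character. The only differences are that the paper reads off $\kappa_{(\alpha,\varphi)}=\kappa_\beta$ directly from its own recursion (Theorem~\ref{recursion}) rather than detouring through Theorem~\ref{thm 1.4} and Reiner--Shimozono's observation, and it does not need your third paragraph's discussion of a common ambient crystal.
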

    
Other recent work \cite{Ku} has also considered crystal structures on flagged objects such as flagged reversed plane partitions; however, these objects are different from what we consider in this paper. 

The structure of the paper is as follows.
In \S\ref{section 2}, we review the definitions of (flagged) key polynomials and key tableaux.
In \S\ref{section 3}, we review Assaf's weak EG insertion algorithm and
extend it to arbitrary flags.
In \S\ref{application}, we review the Morse--Schilling crystal on $\rf_n(w)$ and extend the Assaf--Schilling crystal operators to $\SSKT(\alpha,\varphi)$. Our main results are proved in \S\ref{application}.

\section{Preliminaries}\label{section 2}

In this section, we review some preliminaries on key polynomials and key tableaux from \cite{AS, RS, LS2}. 

\subsection{Key polynomials}\label{2.1}
Let $S_\infty$ be the group of permutations of $\ZZ_{>0}=\{1,2,\cdots\}$
that fix all but finitely many points.
Write $s_i \in S_\infty$ for the adjacent transposition that exchanges $i$ and $i+1$.
A \defn{word} is a finite sequence of positive integers.
A minimal-length word $i_1\cdots i_n$ with $w=s_{i_1}\cdots s_{i_n}$ is a \defn{reduced word} for $w \in S_\infty$.
Let $R(w)$ be the set of all reduced words for $w$.
The \defn{length} of a permutation $w \in S_\infty$ is the common length of any of its reduced words.
In this article, whenever we say ``reduced word'' we mean an element of $R(w)$ for some $w \in S_\infty$.

The group $S_\infty$ acts on $\ZZ[x_1,x_2,\dots]$
by permuting indeterminates.
One has \defn{divided difference operators} $\partial_i$ and $ \pi_i$ on
$\ZZ[x_1,x_2,\dots]$ by the formulas $\partial_i (f):=\frac{f-s_i\cdot f}{x_i-x_{i+1}}$ and $\pi_i (f)
    :=\partial_i (x_if)$.
When $f$ is symmetric in $x_i$ and $x_{i+1}$ (so that $s_i\cdot f=f$),
we have $\partial_i(f)=0$ and $\pi_i(f)=f$.
Both divided difference operators satisfy the braid relations for $S_\infty$ along with $\partial_i^2=0, \ \pi_i^2=\pi_i$ for all $i$.
This means that we can define $\partial_w:=\partial_{i_1}\cdots\partial_{i_n}$
and $\pi_w:=\pi_{i_1}\cdots\pi_{i_n}$ for any $i_1\cdots i_n\in R(w)$. 

For $m\in \ZZ_{\geq0}$,
a \defn{weak composition} $\alpha$ of $m$ is an infinite tuple of non-negative integers
$(\alpha_1,\alpha_2,\dots)$ with finitely many $\alpha_i>0$ and $\sum_{i=1}^\infty\alpha_i=m$.
We identify finite tuples $(\alpha_1,\alpha_2,\dots,\alpha_n)$ with the zero-padded infinite tuples $(\alpha_1,\alpha_2,\dots,\alpha_n,0,0,\dots)$.
The \defn{length} $\ell(\alpha)$ is the minimal $n\in \ZZ_{\geq 0}$ such that $\alpha_i=0$ for all $i>n$.
The group $S_\infty$ acts on weak compositions on the right by permuting components.
A \defn{partition} $\lambda=(\lambda_1,\lambda_2,\dots)$ is a weak composition
such that $\lambda_1\geq \lambda_2\geq \cdots$.

Given a weak composition $\alpha$,
let $w$ be the minimal-length permutation such that $\alpha\cdot w=\lambda$,
where $\lambda$ is a partition.
The \defn{key polynomial} associated to $\alpha$ is $\kappa_\alpha: =\pi_{w}x^{\lambda}$,
where $x^\lambda=x_1^{\lambda_1}x_2^{\lambda_2}\cdots$.
For example, $\kappa_{1201}=\pi_1\pi_3(x^{2110})=x^{2110}+x^{1210}+x^{2101}+x^{1201}$.
We often use the following recursive property of key polynomials: if $\alpha_i>\alpha_{i+1}$ then $\pi_i(\kappa_\alpha) =
    \kappa_{\alpha\cdot s_i}$ and otherwise $\pi_i(\kappa_\alpha) =
    \kappa_{\alpha}$.

\subsection{Key tableaux}
A \defn{diagram} is a finite set of left-aligned boxes in the right half plane $\ZZ \times \ZZ_{>0}$.
Most of the diagrams appearing in this paper will be in $\ZZ_{>0}\times\ZZ_{>0}$.
If not, we draw a horizontal line between the rows indexed by $0$ and $1$. 
Throughout this paper, we use French notation when we draw diagrams,
where the columns are indexed from left to right and the rows are indexed from bottom to top.
The \defn{shape} of a diagram is the weak composition recording its number of boxes in each row whose index is positive. A \defn{tableau} is a filling of a diagram with positive integers.
For example, the tableau on the left has a row with a non-positive index, and the tableau on the right is in $\ZZ_{>0}\times \ZZ_{>0}$:
\[
    \begin{ytableau}
        2 & 7\\
        3 & 6 & \none & \none\\
        \hline
        2 & 4 & 5
    \end{ytableau}
    \qquad
    \begin{ytableau}
        6\\
        \none[\cdot] \\
        3 & 4\\
        2\\
        \none
    \end{ytableau}.
\]
The \defn{weight} of a word $u$
is the weak composition $\weight(u):=(\beta_1,\beta_2,\dots)$ of $\ell(u)$,
where $\beta_i$ is the multiplicity of $i$ in $u$.
The \defn{row reading word} of a tableau $T$ is the word $\rowreading (T) :=  \cdots r^{(2)}r^{(1)}$,
where $r^{(i)}$ is the $i$-th row of $T$ read from left to right.
The \defn{weight} of a tableau $T$ is $\weight(T):= \weight(\rowreading(T))$.
For example, the left tableau has row reading word $2736245$ and weight $(0,2,1,1,1,1,1)$.

A \defn{semistandard Young tableau} is a tableau of a partition shape whose rows are weakly increasing and whose columns are strictly increasing.
A \defn{standard Young tableau} is a semistandard Young tableau whose entries are the numbers $1,2,\dots,n$ with no repetitions for some integer $n\geq 0$.
The following definition of a \defn{key tableau} is identical to \cite[Def.~3.1]{AS}.
Here, given a tableau $T$ and a box $(i,j)$ which it contains, we write $T_{ij}$ for the entry in box $(i,j)$.

\begin{definition}
    \label{key tableau def}
    A \defn{key tableau} $T$ is a tableau such that
    \begin{enumerate}[label=(\alph*)]
        \item each row is weakly decreasing, and each column has distinct entries; and
        \item if $i< k$ and  $T_{ij}>T_{kj}$,
              then $T$ contains $(i,j+1)$ and $T_{i,j+1} > T_{kj}$.
    \end{enumerate}
\end{definition}
A \defn{standard key tableau} is a key tableau filled by $1,2,\dots,n$ with no repetitions for some $n\in \ZZ_{\geq 0}$.
For example, the   tableaux
\[
    \begin{ytableau}
    \ytableausetup{aligntableaux=center}
        5 & 4 & 3\\
        \none[\cdot]\\
        7 & 6 & 5 & 5\\
        1 & 2
    \end{ytableau}
    \qquand
    \begin{ytableau}
        5 & 4 \\
        3 \\
        2 & 1
    \end{ytableau}
\]
are key tableaux of shapes $(2,4,0,3)$ and $(2,1,2)$.

Fix a flag $\varphi$. We say that a key tableau $T$ is \defn{$\varphi$-flagged} if $T_{ij} \leq \varphi(i)$ whenever $(i,j)$ is a box in $T$.
We denote the set of all $\varphi$-flagged key tableaux of shape $\alpha$ as $\SSKT(\alpha,\varphi)$.
If $\phi$ is another flag such that $\varphi(i)=\phi(i)$ for all $1\leq i\leq \ell(\alpha)$, then $\SSKT(\alpha,\varphi)=\SSKT(\alpha,\phi)$.
When $\varphi$ is the standard flag with $\varphi(i)=i$ for all $i$, we omit $\varphi$ in our notation and write $\SSKT(\alpha,\varphi)$ as $\SSKT(\alpha)$.
The elements of $\SSKT(\alpha)$ are what Assaf and Schilling called \defn{semistandard key tableaux} in \cite[Def.~3.2]{AS}. 

\begin{theorem}[{\cite[Prop.~2.6]{Assaf18}}]\label{key-comb-expansion}
    If $\alpha$ is a weak composition then $\kappa_\alpha =\sum_{T\in \SSKT(\alpha)}x^{\weight(T)}$.
\end{theorem}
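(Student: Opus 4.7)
The plan is to prove the identity by induction on the length of the permutation $w \in S_\infty$ of minimal length with $\alpha \cdot w = \lambda$, where $\lambda$ is the partition rearrangement of $\alpha$.

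For the base case $w = e$, so $\alpha = \lambda$ is a partition, I would argue that $\SSKT(\lambda)$ has a unique element $T_\lambda$ whose $i$-th row is filled entirely with $i$'s. Indeed, for partition shape under the standard flag, row $1$ must contain only $1$'s, and then inductively on $i$, each box of row $i$ must have an entry bounded by $i$ but distinct from the entries $1, 2, \ldots, i-1$ appearing below it in its column (by condition (b) of Definition~\ref{key tableau def}), forcing $T_{ij} = i$. Condition (c) is vacuous since every column strictly increases from bottom to top. Thus $\weight(T_\lambda) = \lambda$ and the right-hand side equals $x^\lambda = \kappa_\lambda$.

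For the inductive step, assume $\alpha$ is not a partition, pick $i$ with $\alpha_i < \alpha_{i+1}$, and set $\beta := \alpha \cdot s_i$, so that $\beta_i > \beta_{i+1}$ and the permutation associated to $\beta$ has strictly smaller length. By induction $\kappa_\beta = \sum_{T \in \SSKT(\beta)} x^{\weight(T)}$, and by the recursion \eqref{equation 1} applied to $\beta$ we have $\kappa_\alpha = \pi_i \kappa_\beta$. The task then reduces to the combinatorial identity
\[ \pi_i\Bigl(\sum_{T \in \SSKT(\beta)} x^{\weight(T)}\Bigr) = \sum_{T' \in \SSKT(\alpha)} x^{\weight(T')}, \]
using the expansion $\pi_i(x^\mu) = \sum_{j=0}^{\mu_i - \mu_{i+1}} x^{\mu + j(e_{i+1} - e_i)}$ for $\mu_i \geq \mu_{i+1}$. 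To prove this, I would construct a weight-preserving bijection from $\SSKT(\alpha)$ to the set of pairs $(T, j)$ with $T \in \SSKT(\beta)$ and $0 \le j \le \weight(T)_i - \weight(T)_{i+1}$, given by a local operation on key tableaux that rearranges entries in rows $i$ and $i+1$, extending row $i$ while shrinking row $i+1$ and converting an appropriate number of $i$'s to $i+1$'s (or vice versa).

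The main obstacle is designing the local operation and verifying that it preserves conditions (a)--(c) of Definition~\ref{key tableau def}. Condition (c) is delicate because it couples entries in adjacent rows with entries in the next column, so even a purely local swap between rows $i$ and $i+1$ can break the condition further to the right. A careful specification of which $i$'s should be promoted to $i+1$'s (or demoted back), together with an induction on the column index or a case analysis on the column profiles, is required to confirm both validity of the output and bijectivity. A cleaner alternative is to invoke the Demazure crystal structure on $\SSKT(\alpha)$ developed later in the paper (Theorem~\ref{thm1.3}), from which the character identity follows via the Demazure character formula; this avoids the explicit bijection but requires forward-referencing results not yet available in Section~\ref{section 2}.
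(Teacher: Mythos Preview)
The paper does not give its own proof of this theorem; it is quoted from \cite[Prop.~2.6]{Assaf18} and used as a black box. So there is no in-paper argument to compare your proposal against.

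Your inductive strategy is reasonable in outline, but the bijection step has a real gap beyond the difficulty you already flag. You implicitly assume that every $T\in\SSKT(\beta)$ satisfies $\weight(T)_i \ge \weight(T)_{i+1}$, so that $\pi_i(x^{\weight(T)})$ expands as a nonnegative sum of monomials indexed by $0\le j\le \weight(T)_i-\weight(T)_{i+1}$. This fails in general: for $\beta=(0,2,1)$ and $i=2$, the key tableau with second row $1\,1$ and third row $3$ lies in $\SSKT(\beta)$ (all three conditions of Definition~\ref{key tableau def} are satisfied, and the standard-flag bound holds) and has weight $(2,0,1)$, for which $\pi_2(x^{(2,0,1)})=\partial_2(x_1^2x_2x_3)=0$. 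Whenever $\weight(T)_i < \weight(T)_{i+1}$ the operator $\pi_i$ contributes zero or a \emph{negative} combination of monomials, so the desired identity cannot be realized as a plain bijection between $\SSKT(\alpha)$ and the set of pairs $(T,j)$ you describe. Some sign-reversing involution or other cancellation mechanism is required on top of the shape-changing map, and your proposal does not address this.

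Your alternative of forward-referencing Theorem~\ref{thm1.3} (i.e., Theorem~\ref{demazure crystal sskt}) is circular as the paper stands: that proof invokes \cite[Thm.~3.14]{AS}, whose conclusion already includes the character identity $\ch(\SSKT(\alpha))=\kappa_\alpha$, which is exactly the statement in question.
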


\begin{example}
If $\alpha=(1,2,0,1)$ then $\kappa_\alpha=x^{2110}+x^{1210}+x^{2101}+x^{1201}$ while the set $\SSKT(\alpha)$ consists of four key tableaux:
\[
    \begin{ytableau}
        3\\
        \none[\cdot]\\
        2 & 1\\
        1
    \end{ytableau}
    \quad\begin{ytableau}
        3\\
        \none[\cdot]\\
        2 & 2\\
        1
    \end{ytableau}
    \quad\begin{ytableau}
        4\\
        \none[\cdot]\\
        2 & 1\\
        1
    \end{ytableau}
    \quad\begin{ytableau}
        4\\
        \none[\cdot]\\
        2 & 2\\
        1
    \end{ytableau}.
\]
\end{example}
Key polynomials generalize \defn{Schur polynomials} in the following way.
Let $\lambda$ be a partition such that $\ell(\lambda)\leq n$ and let $\SSYT_n(\lambda)$ be the set of semistandard Young tableaux of shape $\lambda$ filled by $1,2,\dots, n$. The \defn{Schur polynomial} indexed by $\lambda$ with $n$ variables is $s_\lambda(x_1,\dots,x_n)=\sum_{T\in \SSYT_n(\lambda)}x^{\weight(T)}$. 
It is well-known that $s_\lambda(x_1,\dots,x_n)$ is $S_n$-invariant \cite[Thm.~7.10.2]{EC2}.

Suppose $\alpha$ is the weak composition $(\lambda_n,\lambda_{n-1},\dots,\lambda_1)$. For this choice of $\alpha$, any $T\in \SSKT(\alpha)$ is weakly decreasing along each row and strictly increasing along each column. 
We have a weight-reversing bijection from $\SSKT(\alpha)$ to $\SSYT_n(\lambda)$ sending a key tableau $T$ to the unique semistandard Young tableau $S$ with $S_{i,j}  = n+1-T_{n+1-i,j}$. Hence
\[
    \kappa_\alpha =     s_\lambda(x_n,x_{n-1},\dots,x_1) = s_\lambda(x_1,\dots,x_n)
\]
and so every Schur polynomial is a key polynomial. 

Theorem~\ref{key-comb-expansion} suggests a natural flagged generalization of key polynomials.
Our main results imply that these flagged generalizations coincide with the flagged key polynomials defined in \cite{RS}.

\begin{definition}
    The \defn{$\varphi$-flagged key polynomial}
    of a weak composition $\alpha$ is $\kappa_{(\alpha,\varphi)}:=\sum_{T\in \SSKT(\alpha,\varphi)}x^{\weight(T)}$.
\end{definition}

\begin{example}\label{flag key example}
Let $\alpha = (1,2,0,1)$ as above. Let $\varphi$ be the non-standard flag such that $\varphi(1)=2$, $\varphi(2)=3$ and $\varphi(3)=\varphi(4)=4$. There are $7$ key tableaux in $\SSKT(\alpha,\varphi)$ that are not in  $\SSKT(\alpha)$:
\[
    \begin{ytableau}
        2\\
        \none[\cdot]\\
        3 & 3\\
        1
    \end{ytableau}
    \quad\begin{ytableau}
        4\\
        \none[\cdot]\\
        3 & 3\\
        1
    \end{ytableau}
    \quad\begin{ytableau}
        4\\
        \none[\cdot]\\
        3 & 2\\
        1
    \end{ytableau}
    \quad\begin{ytableau}
        4\\
        \none[\cdot]\\
        3 & 1\\
        1
    \end{ytableau}
    \quad\begin{ytableau}
        4\\
        \none[\cdot]\\
        3 & 3\\
        2
    \end{ytableau}
    \quad\begin{ytableau}
        4\\
        \none[\cdot]\\
        3 & 2\\
        2
    \end{ytableau}
    \quad\begin{ytableau}
        4\\
        \none[\cdot]\\
        3 & 1\\
        2
    \end{ytableau}.
\]
In this case $\kappa_{(\alpha,\varphi)}=x^{2110}+x^{1210}+x^{2101}+x^{1201}+x^{1120}+x^{1021}+2x^{1111}+x^{2011}+x^{0121}+x^{0211}$.
\end{example}

\section{Insertion algorithms}\label{section 3}

In this section,
we review Assaf's definition of \defn{weak EG insertion} and the \defn{lift operation} \cite{Assaf19}.
The main goal is to prove Proposition~\ref{flagged bijection}, which describes a flagged version of the weak EG correspondence.
Most of the material in this section is either a review of \cite{Assaf19,AS} or a mild generalization of results therein.
However, Corollary~\ref{corollary 331} and Lemma~\ref{flag lemma} were not presented in \cite{AS}.

\subsection{Partial orders and weak EG insertion tableaux}\label{3.2}

Before we describe weak EG insertion, we need to define a partial order $\leq$ on $R(w)$. 

\begin{definition}\label{incr-fac-def}
    An \defn{increasing factorization} $\rho^{(\bullet)}$ of a reduced word $\rho$
    partitions $\rho$ into consecutive subwords $(\rho^{(k)} | \rho^{(k-1)}|\cdots|\rho^{(1)})$ that are each strictly increasing.
    Some of the subwords $\rho^{(j)}$ in $\rho^{(\bullet)}$ may be empty.
    We say that $\rho^{(\bullet)}$ is a \defn{reduced factorization} for $w$ if $\rho\in R(w)$.
    We denote the $i$-th letter of the $j$-th component in the factorization $\rho^{(\bullet)}$ by $\rho^{(j)}_i$.
    The index $i$ counts from left to right, but the index $j$ counts from right to left,
    following the convention in \cite{Assaf19}.
\end{definition}

There are at least two canonical increasing factorizations associated to a reduced word $\rho$.
First, the \defn{run factorization} of $\rho$
is the increasing factorization obtained by
dividing $\rho$ into increasing subwords of maximal length.
Second, the \defn{trivial factorization} of $\rho$ is the increasing
factorization $(\rho^{(k)} | \rho^{(k-1)}|\cdots|\rho^{(1)})$ with $k= \ell(\rho)$ in which each subword $\rho^{(j)}$ has length one.
For example, the run factorization of $\rho=2736245$
is $(27|36|245)$ 
and the trivial factorization is $(2|7|3|6|2|4|5)$.

\begin{definition}\label{weak descent tableau}
    The \defn{weak descent tableau} of a reduced word $\rho$,
    denoted $\weakdestab(\rho)$,
    is the tableau constructed as follows.
    Suppose $\rho$ has run factorization $(\rho^{(k)} | \rho^{(k-1)}|\cdots|\rho^{(1)})$.
    Place $\rho^{(k)}$ into row $\rho^{(k)}_1$.
    Then iterating over $i=k-1,\dots,2,1$,
    we either place $\rho^{(i)}$ into row $\rho^{(i)}_1$
    if this is below the row containing $\rho^{(i+1)}$
    or place $\rho^{(i)}$ into the row directly below $\rho^{(i+1)}$ otherwise.
\end{definition}

This may result in a tableau with boxes in rows with non-positive index.

\begin{example}\label{weakdestab example}
    Suppose $\rho=2736245$ and $\sigma = 64567342$.
    Then 
    \[
      \weakdestab(\rho)=  \begin{ytableau}\ytableausetup{aligntableaux=center}
            2 & 7\\
            3 & 6 & \none & \none\\
            \hline
            2 & 4 & 5
        \end{ytableau}
        \quand
           \weakdestab(\sigma)=   \begin{ytableau}
            6\\
            \none[\cdot] \\
            4 & 5 & 6 & 7\\
            3 & 4\\
            2\\
            \none[\cdot]
        \end{ytableau}.
    \]
    The horizontal line in $ \weakdestab(\rho)$ divides the positive and non-positive rows.
\end{example}

\begin{remark}
    In \cite{Assaf19}, Assaf defined both a \defn{descent tableau} \cite[Def.~2.3]{Assaf19} and a \defn{weak descent tableau} \cite[Def.~2.8]{Assaf19} of a reduced word $\rho$.
    These are (accidentally) both denoted as $\mathbb{D}(\rho)$.
    Definition~\ref{weak descent tableau} refers to \cite[Def.~2.8]{Assaf19}. In general, the descent tableau and the weak descent tableau of a reduced word are different.
    The descent tableau of $\rho$ can be obtained by removing all empty rows in $\weakdestab(\rho)$.
\end{remark}

A reduced word $\rho$ is \defn{virtual}
if $\weakdestab(\rho)$ occupies a row with a non-positive index.
Otherwise, we say $\rho$ is \defn{non-virtual}.
The \defn{weak descent composition} of a non-virtual reduced word $\rho$ is the shape of $\weakdestab(\rho)$, denoted by $\des(\rho)$.
When $\rho$ is virtual, we define $\des(\rho)=\varnothing$.

\begin{example}
    Continuing from Example~\ref{weakdestab example},
    the reduced word $\rho=2736245$ is virtual and $\des(\rho)=\varnothing$
    since the row consisting of $245$ in the weak descent tableau has a non-positive index.
    But the reduced word $\sigma=64567342$ is non-virtual and $\des(\sigma)=(0,1,2,4,0,1)$.
\end{example}

The \defn{Coxeter--Knuth equivalence relation} is the transitive closure of the relations on reduced words with
$ \mathbf{a}xyz\mathbf{b}      \sim \mathbf{a} xzy \mathbf{b}  $  for $y<x<z$,
with 
$ \mathbf{a}xyz\mathbf{b}      \sim \mathbf{a} yxz\mathbf{b}   $ for $ y<z<x$, and with
$\mathbf{a}i(i+1)i\mathbf{b}  \sim \mathbf{a} (i+1)i(i+1)\mathbf{b}$
whenever $x,y,z,i \in \ZZ_{>0}$ and  $\mathbf{a}$ and $\mathbf{b}$ are possibly empty subwords.

For weak compositions $\mu$ and $\sigma$ of $n$,
we write $\mu\leq \sigma$ if $\sum_{i=1}^k\mu_i\leq \sum_{i=1}^k\sigma_i$ for all $k\geq 1$.
Following \cite{Assaf19}, if $\rho$ and $\tau$ are non-virtual reduced words, then we write $\rho\leq \tau$ if $\rho\sim\tau$ and $\des(\rho)\leq \des(\tau)$ as weak compositions.
This is a partial order,
and each Coxeter--Knuth equivalence class of $R(w)$ contains a unique minimal non-virtual element by \cite[Thm.~4.23]{Assaf19}.
A reduced word is \defn{Yamanouchi} \cite[Def.~4.13]{Assaf19} if it is the minimal element of its Coxeter--Knuth equivalence class under $\leq$. 

\begin{definition}\label{weak EG}
    The \defn{weak Edelman--Greene insertion tableau} $\hat{P}(\rho)$ of a reduced word $\rho$
    is the weak descent tableau of the unique Yamanouchi reduced word $\hat \rho$ with $\rho \sim \hat\rho$.
    We define $\hat P(\rho^{(\bullet)}):=\hat P(\rho)$.
\end{definition}

 Definition~\ref{weak EG} is a generalization of the ordinary \defn{Edelman--Greene insertion} \cite[Def.~6.20]{EG}, which can be defined similarly:
 the \defn{Edelman--Greene insertion tableau} is the unique increasing semistandard Young tableau whose row reading word is Coxeter--Knuth equivalent to $\rho$. 
 Both tableaux can be computed by explicit inductive algorithms, as we explain in the next section.

\subsection{Assaf's lift operation}\label{appendix 1}

Here, we review Assaf's \defn{lift operation} \cite[\S 4.2]{Assaf19}, an algorithm to compute Yamanouchi reduced words.


The lift operation involves the following pairing procedure.
Fix two increasing words $\tau=\tau_1\tau_2\cdots \tau_s$ and $\sigma = \sigma_1\sigma_2\cdots \sigma_t$.
If all letters of $\sigma$ are greater than $\tau_s$,
then the pairing procedure ends and all letters of $\sigma$ are unpaired.
Otherwise, we pair $\tau_s$ with the largest letter $\sigma_{i}$ such that $\tau_s\geq \sigma_{i}$.
Then we repeat the pairing process with the subwords $\tau_1\cdots\tau_{s-1}$ and $\sigma_1\cdots\sigma_{i-1}$.
If the unpaired letters in $\sigma$ are $x_1<\cdots<x_k$, then we can arrange $\tau$ on top of $\sigma$ as
\[
    \begin{array}{llllllll}
        \tau^{(0)} & \tau^{(1)}   &     & \tau^{(2)}   &     & \cdots &       & \tau^{(k+1)}   \\
                   & \sigma^{(1)} & x_1 & \sigma^{(2)} & x_2 & \cdots & x_{k} & \sigma^{(k+1)}
    \end{array}
\]
where $\tau^{(i)}$ and $\sigma^{(i)}$ are possibly empty consecutive subwords with the same length, whose corresponding entries are paired together.

\begin{definition}[{\cite[Def.~4.17]{Assaf19}}]
    If $\tau^{(i)}$, $x_i$, and $\sigma^{(i)}$ are as above then we
    define
    \[\lift(\tau,\sigma)=
        (\tau^{(0)}\tau^{(1)}x_1\tau^{(2)}\cdots x_k\tau^{(k+1)} | {\sigma}^{(1)}\check{\sigma}^{(2)}\cdots\check{\sigma}^{(k+1)}).
    \] 
    For each $1\leq j\leq k$, $\check{\sigma}^{(j+1)}$ denotes the word of length $\ell(\sigma^{(j+1)})$ with
    \[\check{\sigma}^{(j+1)}_i=\begin{cases}
            \sigma^{(j+1)}_i-1 & \text{ for } 1\leq i\leq b_j                       \\
            \sigma^{(j+1)}_i   & \text{ for }  b_j+1\leq i\leq \ell(\sigma^{(j+1)})
        \end{cases}\]
    where
    $b_j\in \ZZ_{\geq 0}$ is maximum such that $\tau^{(j+1)}_i=\sigma^{(j+1)}_i=x_j+i$ for all $1\leq i\leq b_j$.
\end{definition}

An important property of $\lift$ is that if  $\tau$ and $\sigma$ are increasing such that $\tau\sigma$ is a reduced word,
then  $\lift(\tau,\sigma)$ is a reduced word (ignoring the division into factors), and  $\lift(\tau,\sigma)\sim\tau\sigma$ \cite[Lem.~4.18]{Assaf19}.

\begin{definition}[{\cite[Def.~4.19]{Assaf19}}] \label{def 3.1}
    Let $\rho^{(\bullet)} = (\rho^{(k)}| \cdots |\rho^{(1)})$ be an increasing factorization of a reduced word $\rho$.
    Fix $ i \in [k-1]$ and suppose $(\tilde\rho^{(i+1)}|\tilde\rho^{(i)}) = \lift(\rho^{(i+1)},\rho^{(i)})$.
    If $\rho^{(i+1)}$ and $\rho^{(i)}$ are both nonempty and $\tilde\rho^{(i+1)}$ begins with the same letter as $\rho^{(i+1)}$, then we define
    $
        \lift_i(\rho^{(\bullet)}) = (\rho^{(k)}| \cdots | \tilde\rho^{(i+1)}|\tilde\rho^{(i)} |\cdots |\rho^{(1)}).
    $
    Otherwise, let $ \lift_i(\rho^{(\bullet)})  = \rho^{(\bullet)}$.
\end{definition}

For $i\leq j$, we define the \defn{lifting sequence} as $\lift_{[i,j]}=\lift_j\circ\lift_{j-1}\circ \cdots\circ \lift_i$.
    We say $\lift_i$ acts \defn{faithfully} on an increasing factorization $\rho^{(\bullet)}$ if $\lift_i(\rho^{(\bullet)})\neq \rho^{(\bullet)}$.
    We say $\lift_{[i,j]}$ acts faithfully if $\lift_i$ acts \defn{faithfully} on $\rho^{(\bullet)}$ and $\lift_k$ acts faithfully on $\lift_{[i,k-1]}(\rho^{(\bullet)})$ for $i<k\leq j$.

\begin{definition} \label{def 3.3}
    For an increasing factorization $\rho^{(\bullet)}=(\rho^{(k)}|\cdots |\rho^{(1)})$,
    we construct the increasing factorization $\lift(\rho^{(\bullet)})$ as follows.
    Set $(\sigma_0)^{(\bullet)}=\rho^{(\bullet)}$, and assume $(\sigma_1)^{(\bullet)},\dots,(\sigma_{n-1})^{(\bullet)}$ are known.
    \begin{enumerate}[label=(\arabic{*})]
        \item If $\lift_i\bigl((\sigma_{n-1})^{(\bullet)}\bigr)=(\sigma_{n-1})^{(\bullet)}$ for all $i$, then $\lift(\rho^{(\bullet)})=(\sigma_{n-1})^{(\bullet)}$.
        
        \item Otherwise, set $(\sigma_n)^{(\bullet)}=\lift_{[i_n,j_n]}\bigl((\sigma_{n-1})^{(\bullet)}\bigr)$ 
        where $j_n$ is the maximum $j<k$ for which there exists $i\leq j$ such that $\lift_{[i,j]}$ acts faithfully on $(\sigma_{n-1})^{(\bullet)}$, and $i_n$ is the minimum $i\leq j_n$ for which $\lift_{[i,j_n]}$ acts faithfully on $(\sigma_{n-1})^{(\bullet)}$.
    \end{enumerate}
\end{definition}

\begin{definition}\label{lift(T)}
    For an increasing Young tableau $T$ with reduced reading word, let $\rho^{(\bullet)}$ be the run factorization of $\rowreading(T)$ and define $\lift(T):=\weakdestab(\lift(\rho^{(\bullet)}))$.
\end{definition}

\begin{remark}\label{lift-remark}
If $\lift(\rho^{(\bullet)}) = (\eta^{(k)}| \eta^{(k-1)}|\cdots|\eta^{(1)})$
then
$\lift(T)$ is obtained by just placing the word $\eta^{(i)}$ in row $\eta^{(i)}_1$.
    Definition~\ref{lift(T)} is intended to match \cite[Def.~4.22]{Assaf19}, which uses the same notation $\lift(T)$.
    However,  Assaf's definition in \cite{Assaf19} specifies $\lift(T)$ as the tableau obtained by  placing $\eta^{(i)}$ in row $i$.
    This appears to be a mistake; although \cite[Fig.~17]{Assaf19} matches  \cite[Def.~4.22]{Assaf19}, the later figure \cite[Fig.~21]{Assaf19} and associated results use Definition~\ref{lift(T)}. For example, see Figure~\ref{lift-figure}.
\end{remark}

\begin{figure}[ht]
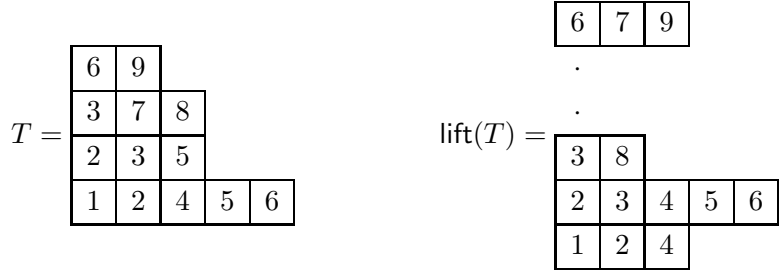

    \begin{displaymath}
        T =\begin{ytableau}\ytableausetup{aligntableaux=center}
            6 & 9 \\
            3 & 7 & 8\\
            2 & 3 & 5 \\
            1 & 2 & 4 & 5 & 6
        \end{ytableau}
        \hspace{5em}
        \lift(T)= \begin{ytableau} \ytableausetup{aligntableaux=center}
            6 & 7 &  9 \\
            \none[\cdot]\\
            \none[\cdot] \\
            3 & 8 \\
            2 & 3 & 4 & 5 & 6 \\
            1 & 2 & 4
        \end{ytableau}
    \end{displaymath}
    \caption{\label{lift-figure}The run factorization of $\row(T)$ is $(69|378|235|12456)$, and $T$ has lift sequences $[i_0,j_0]=[2,3]$ and $[i_1,j_1]=[1,1]$.}
\end{figure}


For a reduced word $\rho$, let $P(\rho)$ be the EG insertion tableau of $\rho$; see \cite[Def.~6.20]{EG}. 
\begin{theorem}[{\cite[Thm.~4.23]{Assaf19}}] 
\label{thm 3.20}
If $\rho$ is a reduced word then $\hat{P}(\rho)=\lift(P(\rho))$.
\end{theorem}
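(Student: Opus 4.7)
The plan is to show that, starting from the reduced word $\row(P(\rho))$ and its run factorization $\rho^{(\bullet)}$, the iterated $\lift$ operation produces the unique Yamanouchi element of the Coxeter-Knuth class of $\rho$; then the conclusion follows by unpacking Definition~\ref{weak EG} and \ref{lift(T)} together with Remark~\ref{lift-remark}.

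First I would establish that the lift operation does not leave the Coxeter-Knuth equivalence class. By \cite[Lem.~4.18]{Assaf19}, for each pair of consecutive increasing factors $(\rho^{(i+1)}|\rho^{(i)})$ the replacement $\lift(\rho^{(i+1)},\rho^{(i)})$ is Coxeter-Knuth equivalent to the concatenation $\rho^{(i+1)}\rho^{(i)}$ as a word. Iterating this over the sequence $\lift_{[i_n,j_n]}$ described in Definition~\ref{def 3.3} keeps the underlying word in a single Coxeter-Knuth class. Since $\row(P(\rho))\sim \rho$ by construction of $P(\rho)$, the final word $\lift(\rho^{(\bullet)})$ remains Coxeter-Knuth equivalent to $\rho$.

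Next I would prove termination and then the Yamanouchi property. For termination, one identifies a monovariant on increasing factorizations that strictly decreases every time some $\lift_i$ acts faithfully; the natural candidate is the tuple of starting letters $(\rho^{(k)}_1,\rho^{(k-1)}_1,\dots,\rho^{(1)}_1)$ read under an appropriate lexicographic order, since a faithful $\lift_i$ by construction lowers the starting letter of $\rho^{(i)}$ while preserving that of $\rho^{(i+1)}$. Once the algorithm halts, I would verify non-virtuality by checking that the terminal factorization $\sigma^{(\bullet)}$ cannot have any factor that would force $\weakdestab$ into a row of non-positive index---otherwise one exhibits an index $i$ on which $\lift_i$ could still act faithfully, contradicting the halting condition. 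Finally, to show the output is minimal under $\leq$, I would argue that if $\tau\sim\rho$ is any non-virtual reduced word with run factorization $\tau^{(\bullet)}$ such that $\des(\tau)\leq \des(\lift(\rho^{(\bullet)}))$, then at each step the conditions defining $\lift_i$ (nonempty factors, preservation of the first letter, Definition~\ref{def 3.1}) force $\tau^{(\bullet)}=\lift(\rho^{(\bullet)})$; this uses the uniqueness of the maximal element (the EG insertion tableau's row reading word) and the way $j_n$ is chosen maximal and $i_n$ minimal so that each faithful lift is forced.

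Having identified $\lift(\rho^{(\bullet)})$ as $\hat\rho$, the unique Yamanouchi representative of the class of $\rho$, the theorem follows: by Definition~\ref{weak EG}, $\hat{P}(\rho) = \weakdestab(\hat\rho)$, and by Definition~\ref{lift(T)} combined with Remark~\ref{lift-remark}, $\lift(P(\rho)) = \weakdestab(\lift(\rho^{(\bullet)})) = \weakdestab(\hat\rho)$. The main obstacle will be the Yamanouchi/minimality step: showing that the greedy choice of $(i_n,j_n)$ in Definition~\ref{def 3.3} is not merely a local optimization but actually descends to the global minimum of $\leq$ on the Coxeter-Knuth class. Termination and Coxeter-Knuth invariance are essentially bookkeeping, but the minimality argument requires a careful inductive comparison between two candidate non-virtual representatives and an analysis of how the pairing procedure in $\lift(\tau,\sigma)$ interacts with the starting letters of neighbouring runs.
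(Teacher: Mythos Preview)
The paper does not prove Theorem~\ref{thm 3.20}: it is quoted verbatim from \cite[Thm.~4.23]{Assaf19} and used as a black box, so there is no in-paper argument to compare your sketch against. What you have written is an outline of how Assaf's original proof might be reconstructed, not an alternative to anything in the present paper.

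That said, your sketch contains a concrete error in the termination step. You claim that a faithful $\lift_i$ ``lowers the starting letter of $\rho^{(i)}$ while preserving that of $\rho^{(i+1)}$.'' The second part is true by fiat (it is the side condition in Definition~\ref{def 3.1}), but the first part is false in general, and in fact the opposite inequality holds. Take $\tau=\rho^{(i+1)}=14$ and $\sigma=\rho^{(i)}=235$: the pairing matches $4$ with $3$, leaving $2$ and $5$ unpaired, and one computes $\lift(14,235)=(1245\mid 3)$. The first letter of $\tau$ is preserved, so $\lift_i$ acts faithfully, yet the first letter of $\sigma$ has gone from $2$ up to $3$. Even worse for your monovariant, with $\tau=13$ and $\sigma=24$ one gets $\lift(13,24)=(134\mid 2)$ and \emph{both} starting letters are unchanged, so the tuple of first letters does not move at all. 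A monovariant that does work is the vector of partial sums of factor lengths (equivalently, the fact noted in the proof of Lemma~\ref{corollary 331} that a faithful $\lift_i$ swaps the lengths of factors $i$ and $i+1$, always making factor $i+1$ strictly longer), but you would need to argue carefully that the greedy choice of $[i_n,j_n]$ cannot cycle. The minimality step, which you correctly flag as the crux, is left as a bare assertion; Assaf's actual argument in \cite{Assaf19} is substantially more involved than ``the greedy choice is forced,'' and your plan does not yet contain the key idea.
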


We note two useful properties of $P(\rho)$. 
A reduced word $\rho$ has a \defn{descent} at $i$ if $\rho_i>\rho_{i+1}$ where $1\leq i\leq \ell(\rho)-1$. 
A standard Young tableau has a \defn{descent} at $i$ if $i+1$ appears in one of the rows above $i$.
Edelman and Greene proved that the EG insertion preserves descents \cite[Lem.~6.28]{EG}. This property combined with Theorem~\ref{thm 3.20} implies:

\begin{corollary}\label{corollary 331}
    Suppose $\rho$ and $\rho xy$ are reduced for $x,y\in \ZZ_{>0}$.
    Suppose the box in $\hat{P}(\rho x)\setminus\hat{P}(\rho)$ is in column $c_x$
    and the box in $\hat{P}(\rho xy)\setminus \hat{P}(\rho x)$ is in column $c_y$.
    Then $x<y$ if and only if $c_x < c_y$.
\end{corollary}

\begin{figure}[ht]
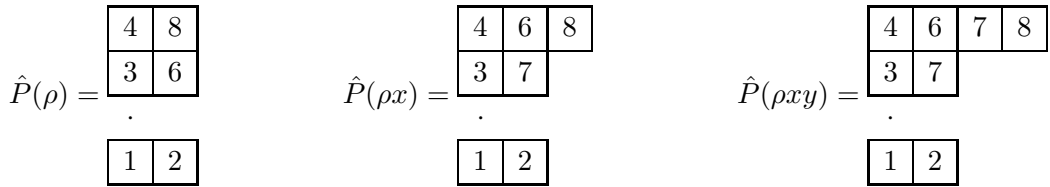

    \begin{displaymath}
        \hat P(\rho )=\begin{ytableau} \ytableausetup{aligntableaux=center}
            4 & 8 \\
            3 & 6 \\
            \none[\cdot] \\
            1 & 2 
        \end{ytableau}
            \hspace{5em}\hat P(\rho x)=\begin{ytableau} \ytableausetup{aligntableaux=center}
                4 & 6&8 \\
                3 & 7 \\
                \none[\cdot] \\
                1 & 2 
            \end{ytableau}
            \hspace{5em}\hat P(\rho xy)=\begin{ytableau} \ytableausetup{aligntableaux=center}
                4 & 6&7 & 8\\
                3 & 7 \\
                \none[\cdot] \\
                1 & 2 
            \end{ytableau}
        \end{displaymath}
        \caption{Let $\rho= 438612$, $x=7$, and $y=8$. Then $c_x = 3$ and $c_y =4$.}
    \end{figure}
It also holds that if $i$ the smallest letter in $\rho$ then $P(\rho)_{(1,1)}=i$. Therefore:

\begin{corollary}\label{smallest row index}
    If $i$ is the smallest letter in $\rho$ then the first nonempty row in $\hat P(\rho)$ has index $i$.
\end{corollary}

\subsection{Weak EG recording tableaux}

This section introduces a recording tableau $\hat Q(\rho^{(\bullet)})$ for the weak EG insertion 
which slightly generalizes constructions in \cite{Assaf19, AS}. These references 
defined $\hat Q(\rho^{(\bullet)})$ when $\rho^{(\bullet)}$ is either the trivial factorization or bounded by the standard flag.
Here, we extend to arbitrary flagged factorizations. 
    
\begin{definition}
    The \defn{weak Edelman--Greene recording tableau} $\hat{Q}(\rho^{(\bullet)})$ is the tableau
    with same shape as $\hat{P}(\rho^{(\bullet)})$ having $i$ in the all boxes
    that are in $\hat{P}(\rho^{(k)} |\cdots|\rho^{(i+1)}|\rho^{(i)})$
    but not in $\hat{P}(\rho^{(k)} |\cdots|\rho^{(i+1)})$.
    We define $\hat{Q}(\rho)$ to be the weak EG recording tableau of the trivial factorization of $\rho$.
\end{definition}
The weak EG recording tableau is well-defined because if $\rho$ and $\rho x$ are reduced for $x\in \ZZ_{>0}$
then $\hat{P}(\rho x)\setminus \hat P(\rho)$ has exactly one box \cite[Lem.~5.8]{Assaf19}. 
\begin{example}
    Let $\rho^{(\bullet)}=(3|26|56|4)$.
    As we insert $\rho^{(\bullet)}$ from left to right, insertion tableaux are
    \[
    \begin{array}{cccccc}
        \ytableausetup{aligntableaux=bottom}
            \begin{ytableau}3\\\none[\cdot]\\\none[\cdot]\end{ytableau}
             & \begin{ytableau}3\\2\\\none[\cdot]\end{ytableau}
             & \begin{ytableau}3& 6\\2\\\none[\cdot]\end{ytableau}
             & \begin{ytableau}3& 6\\2&5\\\none[\cdot]&\none[\cdot]\end{ytableau}
             & \begin{ytableau}3& 5&6\\2&5\\\none[\cdot]&\none[\cdot]\end{ytableau}
             & \begin{ytableau}6\\\none[\cdot]\\\none[\cdot]\\3 & 5& 6 \\2&  4\\\none[\cdot]&\none[\cdot]\end{ytableau}
\end{array}.\]
The corresponding weak EG recording tableau $\hat Q$ at each step is as follows:
    \[
    \begin{array}{cccccc}
\begin{ytableau}4\\\none[\cdot]\\\none[\cdot]\end{ytableau}
& \begin{ytableau}4\\3\\\none[\cdot]\end{ytableau}
& \begin{ytableau}4& 3\\3\\\none[\cdot]\end{ytableau}
& \begin{ytableau}4& 3\\3&2\\\none[\cdot]&\none[\cdot]\end{ytableau}
             & \begin{ytableau}4 &3 &2\\ 3 & 2\\\none[\cdot]&\none[\cdot]\end{ytableau}
             & \begin{ytableau}1\\\none[\cdot]\\\none[\cdot]\\4& 3& 2 \\3& 2 \\\none[\cdot]&\none[\cdot]\end{ytableau}
        \end{array}.\]
        The final tableaux on the right are $\hat P(\rho^{(\bullet)})$ and $\hat Q(\rho^{(\bullet)})$.
\end{example}

Let $\beta=(\beta_1,\beta_2,\dots)$ be the weight of a key tableau $T$. 
We \defn{standardize} $T$ by the following procedure:
first replace all the $1$'s by $1,2,\dots, \beta_1$ from right to left, then replace all the $2$'s by $\beta_1+1,\beta_1+2,\dots, \beta_1+\beta_2$, and so on. 
Denote the result by $\stdk\big(T\big)$,
which is a standard key tableau of the same shape as $T$.
If $\rho$ is a reduced word, then
$\stdk\big(\hat{Q}(\rho^{(\bullet)})\big)$ is equal to $\hat{Q}(\rho)$.
If \[
T=\begin{ytableau}
        \ytableausetup{aligntableaux=center}
        3\\\none[\cdot]\\2& 2 \\5& 4& 3& 1 \\\none[\cdot]&\none[\cdot]&\none[\cdot]&\none[\cdot]\end{ytableau}
        \quad\text{then}\quad
        \stdk(T)=\begin{ytableau}5\\\none[\cdot]\\3& 2 \\7& 6& 4& 1 \\\none[\cdot]&\none[\cdot]&\none[\cdot]&\none[\cdot]\end{ytableau}.\]

\begin{lemma}
    The weak EG recording tableau $\hat{Q}(\rho^{(\bullet)})$ is a key tableau.
\end{lemma}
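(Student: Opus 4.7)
Write $T := \hat Q(\rho^{(\bullet)})$. Recall that the boxes of $T$ carrying label $i$ are exactly the boxes added to the shape of $\hat P$ during the insertion of the factor $\rho^{(i)}$. My plan is to verify in turn the three conditions in Definition~\ref{key tableau def}.

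For condition (b) (distinct entries in each column), write $\rho^{(i)} = y_1 < y_2 < \dots < y_s$ and iterate Lemma~\ref{corollary 331} on consecutive pairs to conclude that the new boxes added by $y_1, \dots, y_s$ occupy strictly increasing columns. Hence each column contains at most one box with label $i$.

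For condition (a) (rows weakly decreasing), the pivotal claim is that, whenever a single letter $x$ is inserted, the new box in $\hat P(\rho x) \setminus \hat P(\rho)$ lies at the rightmost occupied column of its row. Granted this, any box already present in that row was inserted earlier and hence carries a strictly larger label, while within a single factor the added boxes occupy strictly increasing columns by the previous paragraph; together these yield weakly decreasing rows. I expect this rightmost-placement claim to be the main obstacle of the proof; it can be attacked using the description $\hat P(\rho) = \weakdestab(\hat\rho)$ from Definition~\ref{weak EG}, tracking via the $\lift$ procedure of Definition~\ref{def 3.3} how the run structure of the Yamanouchi word $\hat\rho$ changes to that of $\widehat{\rho x}$. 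The net effect on the shape should be either to extend a run to the right by one letter or to insert a fresh singleton run in an empty row, either way placing the new box at the rightmost occupied position of its row.

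For condition (c) (the key condition), I would combine the identity $\stdk(T) = \hat Q(\rho)$ stated just before the lemma with the fact that $\hat Q(\rho)$ is itself a standard key tableau (provable by induction on the length of $\rho$ using the same structural claim). Suppose $T_{ij} > T_{kj}$ for $i < k$, and set $S := \stdk(T)$. The strict label inequality in $T$ forces $S_{ij} > S_{kj}$, so the key property of $S$ gives $(i, j+1) \in S$ and $S_{i,j+1} > S_{kj}$. Destandardizing yields $T_{i,j+1} \geq T_{kj}$, and equality is impossible: if $T_{i,j+1} = T_{kj} =: b$, then both boxes lie in factor $b$; by Lemma~\ref{corollary 331}, $(k,j)$ precedes $(i, j+1)$ in insertion order within that factor, so in the trivial factorization $(k, j)$ receives a strictly larger standardized value than $(i, j+1)$, contradicting $S_{i,j+1} > S_{kj}$. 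Hence $T_{i,j+1} > T_{kj}$, as required.
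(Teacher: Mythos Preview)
Your argument for (b) and your reduction of (c) to the fact that $\stdk(T)=\hat Q(\rho)$ is a standard key tableau are exactly what the paper does; your treatment of (c) just spells out the ``by definition of standardization'' step that the paper leaves implicit. The substantive difference is that the paper does not attempt to prove the rightmost--placement claim or, equivalently, that $\hat Q(\rho)$ is a standard key tableau: it simply cites \cite[Thm.~5.9]{Assaf19} for this and is done in three lines. Your proposal instead tries to reprove that theorem from scratch via the structural claim about where the new box lands.

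That is where the gap lies. You correctly identify the rightmost--placement property as the crux, but you only sketch an attack (``track via the $\lift$ procedure how the run structure of the Yamanouchi word changes''). This is nontrivial: the passage from $\hat\rho$ to $\widehat{\rho x}$ is not obtained by a single $\lift$ step and the Yamanouchi word can change globally, so the claim requires the full analysis carried out in \cite{Assaf19}. Moreover, your inductive plan for showing $\hat Q(\rho)$ satisfies condition~(c) ``using the same structural claim'' is underspecified: when the new box lands in row $k$ and column $j$, verifying (c) for a higher row $i<k$ at column $j$ needs information about the \emph{shape} (that $(i,j+1)$ is occupied), which does not follow from rightmost--placement alone. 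Finally, note that your separate direct argument for (a) is unnecessary: once $\hat Q(\rho)$ is known to be a standard key tableau, weak row--decrease of $T$ follows immediately from strict row--decrease of $\stdk(T)$, just as (c) does. In short, your outline is sound, but the hard content you defer is precisely \cite[Thm.~5.9]{Assaf19}, and the paper's proof is shorter because it invokes that result rather than reconstructing it.
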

\begin{proof}
    By \cite[Thm.~5.9]{Assaf19}, $\stdk\big(\hat{Q}(\rho^{(\bullet)})\big)$ is a standard key tableau. 
The conditions in Definition~\ref{key tableau def} follow by the definition of standardization.
\end{proof}
 
Fix a flag $\varphi$, that is, a weakly increasing map $\varphi:\ZZ_{> 0}\to \ZZ_{> 0}$ with $\varphi(i)\geq i$.

\begin{definition}\label{rfc-def}
We say that $\rho^{(\bullet)}$ is \defn{$\varphi$-flagged}
    if for each $i\geq 1$, the first entry of block $i$ satisfies $\varphi(\rho_1^{(i)})\geq i$ when $\rho^{(i)}$ is nonempty.
    We denote the set of all $\varphi$-flagged reduced factorizations for $w$ as $\rfc(w,\varphi)$.
    If $\varphi$ is the standard flag, then we omit $\varphi$ in the notation and write $\rfc(w)$, which is the same as the set of \defn{reduced factorizations with cutoff} defined in \cite[Def.~5.4]{AS}.
\end{definition}

\begin{remark}\label{remark 318}
    Fix $n\in \ZZ_{>0}$ and let $\rfc_n(w,\varphi)$ be the subset of $\rfc(w,\varphi)$
    consisting of factorizations whose nonempty components lie in the first $n$ components.
    Let $t=\min \{i\in \ZZ_{>0}\mid \varphi(i)\geq n\}$.
    If $\phi$ is another flag such that $\varphi(j)=\phi(j)$ for all $1\leq j\leq t$,
    then $\rfc_n(w,\varphi)=\rfc_n(w,\phi)$.
\end{remark}

\begin{lemma}\label{flag lemma}
    The weak EG recording tableau $\hat{Q}(\rho^{(\bullet)})$ is
    $\varphi$-flagged (in the sense that any entry in the $i$-th row is at most $\varphi(i)$) if and only if $\rho^{(\bullet)}$ is $\varphi$-flagged.
\end{lemma}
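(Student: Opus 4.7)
The plan is to prove the equivalence by first reducing both flagged conditions to their sharpest forms via monotonicity, and then connecting rows of $\hat Q$ to first entries of blocks of $\rho^{(\bullet)}$. Because $\varphi$ is weakly increasing with $\varphi(n)\geq n$, because the rows of the key tableau $\hat Q$ are weakly decreasing by Definition~\ref{key tableau def}(a), and because the blocks $\rho^{(i)}$ are strictly increasing by Definition~\ref{incr-fac-def}, the condition ``$\hat Q$ is $\varphi$-flagged'' is equivalent to $\varphi(r)\geq i^*(r)$ holding for every nonempty row $r$ (where $i^*(r)$ denotes the maximum entry in row $r$), while ``$\rho^{(\bullet)}$ is $\varphi$-flagged'' is equivalent to $\varphi(\rho_1^{(i)})\geq i$ holding for every nonempty block $i$. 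Both conditions are automatic when $i^*(r)\leq r$ or $\rho_1^{(i)}\geq i$, respectively, so only the ``nontrivial'' indices need attention.

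The crux of the argument is the \emph{Key Claim}: for every nonempty row $r$ of $\hat Q(\rho^{(\bullet)})$, the block $j=i^*(r)$ satisfies $\rho_1^{(j)}\leq r$. Granting this, direction $(\Leftarrow)$ is immediate, since $\varphi(r)\geq \varphi(\rho_1^{(j)})\geq j = i^*(r)$ follows from $\rho^{(\bullet)}$-flaggedness. For direction $(\Rightarrow)$
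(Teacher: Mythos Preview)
Your proposal is incomplete in two ways: the argument for direction $(\Rightarrow)$ is cut off entirely, and the Key Claim --- which you correctly identify as the crux --- is stated but never proved. This is a genuine gap, because the Key Claim does not follow immediately from the available tools.

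To see the difficulty: suppose $j=i^*(r)$, so box $(r,1)$ of $\hat Q$ carries the entry $j$; you want $\rho_1^{(j)}\leq r$. Lemma~\ref{smallest row index} tells you only that the \emph{minimum letter} of $\rho^{(k)}\cdots\rho^{(j)}$ equals the lowest nonempty row index of $\hat P(\rho^{(k)}|\cdots|\rho^{(j)})$, hence is $\leq r$. But that minimum could occur in a block $j'>j$, not in $\rho^{(j)}$ itself; you would then still have to rule out the possibility that every letter of $\rho^{(j)}$ exceeds $r$ while row $r$ is nonetheless created during the insertion of $\rho^{(j)}$. Excluding this requires a finer structural fact about weak EG insertion (essentially, that the set of occupied row indices of $\hat P(\sigma)$ coincides with the first column of $P(\sigma)$, so that a newly created row always has index at least the letter just inserted). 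That fact is plausible but is nowhere established in the paper, and proving it takes real work with the $\lift$ operation.

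The paper's proof sidesteps this entirely by organizing both directions around block indices $i$ rather than rows $r$. For $\rho^{(\bullet)}$ flagged $\Rightarrow$ $\hat Q$ flagged, one observes that if $\rho^{(\bullet)}$ is $\varphi$-flagged then every letter of $\rho^{(k)}\cdots\rho^{(i)}$ is at least $l_i:=\min\{n:\varphi(n)\geq i\}$; Lemma~\ref{smallest row index} then forces every box of $\hat Q$ with entry $\geq i$ into a row of index $\geq l_i$, whence $\varphi(\text{row})\geq i$. For $\hat Q$ flagged $\Rightarrow$ $\rho^{(\bullet)}$ flagged, one lets $l_i$ be the lowest nonempty row of $\hat Q|_{[i,k]}$; by Lemma~\ref{smallest row index} this equals the minimum letter of $\rho^{(k)}\cdots\rho^{(i)}$, so $\rho_1^{(i)}\geq l_i$, and since box $(l_i,1)$ has entry $\geq i$ the flaggedness of $\hat Q$ gives $\varphi(l_i)\geq i$, hence $\varphi(\rho_1^{(i)})\geq i$. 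Neither direction needs your Key Claim.
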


\begin{proof}

    Suppose $\hat{Q}(\rho^{(\bullet)})$ is $\varphi$-flagged. 
    Let $\hat{Q}(\rho^{(\bullet)})|_{[i,k]}$ be the restriction to boxes with entries in $\{i,\dots,k\}$. 
    For $1\leq i \leq k$, $\hat{Q}(\rho^{(\bullet)})|_{[i,k]}$ is $\varphi$-flagged.
    Assume $\rho^{(i)}$ is nonempty, and 
    suppose the first nonempty row of $\hat{Q}(\rho^{(\bullet)})|_{[i,k]}$ is row $l_i$ with the maximum entry $j$. 
    By the flagged condition, we have $i\leq j\leq \varphi(l_i)$.
    Because the minimal letter of $\rho^{(k)}\rho^{(k-1)}\cdots\rho^{(i)}$ is $l_i$ by Corollary~\ref{smallest row index}, the first entry $\rho^{(i)}_1$ of the $i$-th component is at least $l_i$. Hence, $\varphi(\rho^{(i)}_1)\geq\varphi(l_i)\geq i$. 
    Thus, $\rho^{(\bullet)}$ is $\varphi$-flagged.

    Conversely, suppose $\rho^{(\bullet)}$ is $\varphi$-flagged.
    The letters in $\rho^{(k)}\cdots \rho^{(i)}$ are bounded below by $l_i$,
    where $l_i=\min \{n\mid \varphi(n)\geq i\}$.
    By Corollary~\ref{smallest row index}, $\hat{Q}(\rho^{(\bullet)})|_{[i,k]}$ has no boxes below row $l_i$.
    If an $i$-entry in $\hat{Q}(\rho^{(\bullet)})$ shows up in row $j$, then we have $ l_i\leq j$ and $i\leq \varphi(l_i)\leq \varphi(j)$.
    Hence, $\hat{Q}(\rho^{(\bullet)})$ is $\varphi$-flagged.
\end{proof}

Given a weak composition $\alpha$,
let $\YR_\alpha(w)$ be the set of Yamanouchi reduced words $\sigma$ for $w$ such that $\des(\sigma)=\alpha$. The map $\weakdestab(-):\YR_\alpha(w)\to \{\hat P(\rho)\mid \rho\in R(w),\sh\(\hat P(\rho)\)=\alpha\} $ is a bijection and the inverse map is given by $\rowreading(-)$.
Here we generalize \cite[Cor.~5.8]{AS} to all flags.

\begin{proposition}\label{flagged bijection}
    The weak EG insertion map $\rho^{(\bullet)}\mapsto \big(\hat{P}(\rho^{(\bullet)}),\hat{Q}(\rho^{(\bullet)})\big)$ is a weight-preserving bijection
    $\rfc(w,\varphi)\to \bigsqcup_{\alpha}\big(\YR_\alpha(w)\times \SSKT(\alpha,\varphi)\big)
    $.
\end{proposition}

\begin{proof}

    Suppose $\YR_\alpha(w)$ is nonempty and $(\hat{P},\hat{Q})\in\YR_\alpha(w)\times \SSKT(\alpha,\varphi)$.
    There exists a unique reduced word $\rho$ 
    such that $\hat{P}(\rho)=\hat{P}$
    and $\hat{Q}(\rho)=\stdk(\hat{Q})$ by {\cite[Cor.~5.12]{Assaf19}}.
    Let $\beta=(\beta_1,\dots,\beta_k)$ be the weight of $\hat{Q}$ and define $\rho^{(\bullet)}=(\rho^{(k)} |\cdots|\rho^{(1)})$ to be the unique factorization
    of $\rho $ with  
    $\ell(\rho^{(i)})=\beta_i$. 
    The factorization is increasing by Corollary~\ref{corollary 331} and by construction $\hat{Q}(\rho^{(\bullet)})=\hat{Q}$.
Since  $\hat{Q}$ is $\varphi$-flagged, Lemma~\ref{flag lemma} implies that $\rho^{(\bullet)}$ is $\varphi$-flagged.
    Hence, the weak EG insertion map is surjective.
    
   Suppose $\sigma^{(\bullet)}\in \rfc(w,\varphi)$ such that $(\hat{P}(\rho^{(\bullet)}),\hat{Q}(\rho^{(\bullet)}))=(\hat{P}(\sigma^{(\bullet)}),\hat{Q}(\sigma^{(\bullet)}))$.
    Then $\stdk(\hat{Q}(\rho^{(\bullet)}))=\stdk(\hat{Q}(\sigma^{(\bullet)}))$, so 
     $\rho =\sigma$ by \cite[Cor.~5.12]{Assaf19}.
    Hence $\rho^{(\bullet)} =\sigma^{(\bullet)}$ 
    because $\weight(\hat{Q}(\rho^{(\bullet)})) = \weight(\hat{Q}(\sigma^{(\bullet)}))$.
\end{proof}

\section{Crystal structures}\label{application}

In this section, we prove our main results from the introduction.
Throughout, we fix $n\in \ZZ_{>0}$, $w\in  S_\infty$, a flag $\varphi$, and a weak composition $\alpha$. 
For a positive integer $k$, let $[k]:=\{1,2,\dots,k\}$.

\subsection{Crystal structure on reduced factorizations}\label{4.1}
We begin with the basics of \defn{Morse--Schilling crystals}  on reduced factorizations \cite[\S 3.2]{MS}.
All \defn{crystals} refer to \defn{$\gl_{n}$-crystals} in the sense of \cite{BS}. 
Such a crystal consists of a finite set $\cB$ with \defn{raising} and \defn{lowering operators} $e_i,f_i : \cB\to \cB \sqcup\{0\}$ indexed by $1\leq i \leq n-1$, 
along with a weight function $\weight$ taking values in $\ZZ^n$.
It is required that for any $b,c\in \cB$, we have $e_i(b)=c$ if and only if $f_i(c)=b$, and in this case $\weight(c)= \weight(b)+\e_{i}-\e_{i+1}$ where $\e_i$ is the $i$-th standard basis vector of $\ZZ^n$.
Literature on crystals sometimes involves additional axioms, but we will not impose any of those here.

The \defn{crystal graph} of $\cB$ is a directed graph with vertices in $\cB$ and edges labeled by $[n-1]$. 
For $x,y\in \cB$, we draw an edge $x\xrightarrow{i}y$ if $f_i(x)=y$. 
A \defn{connected component} of $\cB$ is a subset of $\cB$ whose elements form a connected component in the crystal graph of $\cB$. 
The \defn{character} of a finite crystal $\cB$ is the Laurent polynomial $\ch(\cB):=\sum_{b\in \cB} x^{\weight(b)}$.
A \defn{crystal isomorphism} between crystals $\cB$ and $\cC$ is a weight-preserving bijection $\cB\to \cC$ that commutes with all raising and lowering operators.

Let $\rf_n(w)$ be the set of all reduced factorizations $r^{(\bullet)}=(r^{(n)}|\cdots |r^{(1)})$ for
$w\in  S_\infty$ (as specified in Definition~\ref{incr-fac-def}) 
with exactly $n$ components, some of which may be empty.
Suppose $r^{(\bullet)}=(r^{(n)}|\cdots |r^{(1)})\in \rf_n(w)$.
The operators $e_i$ and $f_i$ applied to $r^{(\bullet)}$ only change the factors $r^{(i+1)}$ and $r^{(i)}$.
The definition of these operators depends on the following pairing procedure.

Starting with the largest element $b$ in $r^{(i)}$,
pair it with the smallest element $a$ in $r^{(i+1)}$ with $a>b$.
If there is no such $a$ then $b$ is unpaired.
Next, we pair the second-largest element $b'$ in $r^{(i)}$ with the smallest unpaired element $a'$ in  $r^{(i+1)}$ with $a'>b'$.
If there is no such $a'$ then $b'$ is unpaired.
We continue this procedure for the remaining elements of $r^{(i)}$ in decreasing order, ignoring at each stage
any elements in $r^{(i+1)}$ that have already been paired.
Once the procedure ends,
we define 
\be
\begin{aligned}
R_i(r^{(\bullet)}) &= \{b\in r^{(i)}\mid b \text{ is unpaired in the pairing of }r^{(i+1)}r^{(i)}\}, \\
L_i(r^{(\bullet)}) & =\{a\in r^{(i+1)}\mid a \text{ is unpaired in the pairing of }r^{(i+1)}r^{(i)}\}.
\end{aligned}
\ee
Then $f_i(r^{(\bullet)})$ and $e_i(r^{(\bullet)})$ are given as follows:

\begin{definition}
If $R_i(r^{(\bullet)})=\varnothing$
then $f_i(r^{(\bullet)})=0$.
Otherwise, $f_i(r^{(\bullet)})$ is obtained by replacing $r^{(i+1)}$
and $ r^{(i)}$ with $\tilde{r}^{(i+1)}$ and $\tilde{r}^{(i)}$, respectively,
where
\[\tilde{r}^{(i)}=r^{(i)}\setminus \{b\} \quad\text{and}\quad  \tilde{r}^{(i+1)}=r^{(i+1)}\cup\{b-t\}\]
for $b=\min R_i(r^{(\bullet)})$ and $t=\min \{j\geq 0\mid b-j-1\notin r^{(i)}\}$.
Similarly, if $L_i(r^{(\bullet)})=\varnothing$
then $e_i(r^{(\bullet)})=0$.
Otherwise, $e_i(r^{(\bullet)})$ is obtained by replacing $r^{(i+1)}$ and $ r^{(i)}$ with $\tilde{r}^{(i+1)}$ and $\tilde{r}^{(i)}$, respectively,
where
\[\tilde{r}^{(i)}=r^{(i)}\cup \{a+s\} \quad\text{and}\quad  \tilde{r}^{(i+1)}=r^{(i+1)}\setminus\{a\}\]
for $a=\max L_i(r^{(\bullet)})$ and $s=\min \{j\geq 0\mid a+j+1\notin r^{(i+1)}\}$.
\end{definition}

The pairing procedure ensures that $b-t+1,\dots, b-1,b \in r^{(i+1)}$ and $a,a+1,\dots,a+s-1\in r^{(i)}$. 
Since $r$ is reduced, we have $b-t\notin r^{(i+1)}$ and $a+s\notin r^{(i)}$. 

\begin{example}
    Let $r^{(\bullet)}=(268|14|345)$.
    Then   $L_1(r^{(\bullet)})=\{1\}$ and $R_1(r^{(\bullet)})=\{5,4\}$,
    so  $f_1(r^{(\bullet)})=(268|134|35)$ and $e_1(r^{(\bullet)})=(268|4|1345)$.
\end{example}

For $r^{(\bullet)}\in\rf_n(w)$, let $\weight(r^{(\bullet)}) := (\ell_1,\ell_2,\dots,\ell_n)$
where $\ell_i$ is the length of $r^{(i)}$. For example, this gives $\weight((268|134|35)) = (2,3,3)$.
The operators $f_i$ and $e_i$ for $1 \leq i < n$ and the weight function $\weight$ define a $\gl_{n}$-crystal structure on $\rf_n(w)$ \cite[Thm.~3.5]{MS}. Additionally, if $e_i(r^{(\bullet)})\neq 0$
then the underlying reduced words of $r^{(\bullet)}$ and $e_i(r^{(\bullet)})$ are Coxeter--Knuth equivalent by \cite[Thm.~4.11]{MS}.

\begin{lemma}\label{crystal and flag}
If $r^{(\bullet)}\in\rf_n(w)$ is $\varphi$-flagged then $e_i(r^{(\bullet)})$ is $\varphi$-flagged or zero.
\end{lemma}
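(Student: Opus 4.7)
The plan is as follows. Suppose $r^{(\bullet)}$ is $\varphi$-flagged and $e_i(r^{(\bullet)}) \neq 0$; write $\tilde r^{(\bullet)} := e_i(r^{(\bullet)})$. Since $e_i$ modifies only the $i$-th and $(i+1)$-st components, the flag inequality $\varphi(\tilde r^{(j)}_1) \geq j$ for every $j \notin \{i,i+1\}$ is inherited directly from $r^{(\bullet)}$. So I would reduce to verifying the flag condition on the two affected components.

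For the $(i+1)$-st component, recall that $\tilde r^{(i+1)} = r^{(i+1)} \setminus \{a\}$, where $a = \max L_i(r^{(\bullet)})$. Removing a single element from a set of positive integers can only weakly increase the minimum, so if $\tilde r^{(i+1)}$ is nonempty then $\tilde r^{(i+1)}_1 \geq r^{(i+1)}_1$. Since $\varphi$ is weakly increasing, I would conclude
\[
\varphi(\tilde r^{(i+1)}_1) \;\geq\; \varphi(r^{(i+1)}_1) \;\geq\; i+1,
\]
where the last inequality uses that $r^{(\bullet)}$ is $\varphi$-flagged and $r^{(i+1)}$ is nonempty (because $L_i \neq \varnothing$).

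For the $i$-th component, $\tilde r^{(i)} = r^{(i)} \cup \{a+s\}$, which is nonempty. Its first entry is $\min(r^{(i)}_1, a+s)$, with the convention that $r^{(i)}_1 = +\infty$ when $r^{(i)} = \varnothing$. I would bound each candidate separately. Since $a \in r^{(i+1)}$, we have $a \geq r^{(i+1)}_1$, and $s \geq 0$ gives $a+s \geq a \geq r^{(i+1)}_1$; hence
\[
\varphi(a+s) \;\geq\; \varphi(r^{(i+1)}_1) \;\geq\; i+1 \;>\; i.
\]
If $r^{(i)}$ was nonempty, then also $\varphi(r^{(i)}_1) \geq i$ by hypothesis. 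Taking the minimum, $\varphi(\tilde r^{(i)}_1) \geq i$, which is what we needed.

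I do not expect a real obstacle here: the whole argument is just tracking how the minimum element of each affected component can shift under $e_i$ and then invoking monotonicity of $\varphi$. The only subtle point is the case $r^{(i)} = \varnothing$, which forces $s = 0$ and $\tilde r^{(i)} = \{a\}$; I would handle this uniformly by bounding $a+s$ through the relation $a \in r^{(i+1)}$, rather than appealing to the pairing fact ``$a \in r^{(i)}$'' cited in the paper (which only gives information when the range $[a,a+s-1]$ is nonempty).
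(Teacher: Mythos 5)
Your proof is correct and follows essentially the same route as the paper: both observe that $e_i$ removes $a$ from $r^{(i+1)}$ and adds $a+s \geq a$ to $r^{(i)}$, then use monotonicity of $\varphi$ together with $\varphi(a) \geq i+1 > i$ to conclude. The paper's version is terser because it uses the equivalent reformulation that $\varphi$-flagged means each letter $m$ appears only in the first $\varphi(m)$ components, but the underlying argument is the same.
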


\begin{proof}
Assume $r^{(\bullet)}$ is $\varphi$-flagged. Each number $i$ only appears in the rightmost $\varphi(i)$ components.
Since $e_i(r^{(\bullet)}) \neq 0$ is formed by removing $a$ from $r^{(i+1)}$ and adding $a+s\geq a$ to $r^{(i)}$, 
it is $\varphi$-flagged.
\end{proof}

\subsection{Demazure crystals}\label{s4.2}

Recall that if $\cB$ is a crystal and $X\subseteq \cB$, then we define
    $\fkD_i^{\cB}X:=\{b\in \cB\mid e_i^k(b)\in X \text{ for some }k\geq 0\}$.
    We abbreviate the \defn{Demazure operator} $\fkD_i^{\cB}$ as $\fkD_i$ if $\cB$ is clear from the context. 

Suppose $u^{(\bullet)}\in \rf_n(w)$ is a \defn{highest weight element} in the sense that $e_i(u^{(\bullet)})=0$ for all $1\leq i<n$.  
If $i_1\cdots i_k,j_1\cdots j_k\in R(\sigma)$ for some $\sigma\in S_n$ and $\fkD_i:=\fkD_i^{\rf_n(w)}$, 
then $\fkD_{i_1}\cdots\fkD_{i_k}\{u^{(\bullet)}\}=\fkD_{j_1}\cdots\fkD_{j_k}\{u^{(\bullet)}\}$ \cite[Thm.~13.5]{BS}.
We can therefore define $\fkD_\sigma\{u^{(\bullet)}\}:=\fkD_{i_1}\cdots\fkD_{i_k}\{u^{(\bullet)}\}$ 
when $i_1\cdots i_k\in R(\sigma)$ and refer to $\fkD_\sigma\{u^{(\bullet)}\}$ as a \defn{Demazure subcrystal} of $\rf_n(w)$. 
We view $\fkD_\sigma\{u^{(\bullet)}\}$ as a crystal by redefining $e_i$ and $f_i$ to act as zero whenever they would send an element outside the subset.
The Demazure character formula \cite[Thm.~13.7]{BS} implies that $\ch(\fkD_\sigma\{u^{(\bullet)}\})=\pi_\sigma x^{\weight(u^{(\bullet)})}$.

\begin{definition}\label{dem-def}
A \defn{Demazure crystal} is a crystal isomorphic to a Demazure subcrystal of $\rf_n(w)$ for some $w \in S_\infty$ and some $n\in \ZZ_{>0}$.
\end{definition}

It is known (see \cite{K}) that 
two Demazure crystals are isomorphic if and only if they have the same character, and that every $\kappa_\alpha$ (with $\ell(\alpha)\leq n$) occurs as the character of some Demazure crystal.
Moreover, any Demazure crystal that can be embedded in $\rf_n(w)$ must be equal to $\fkD_\sigma\{u^{(\bullet)}\}$ for some $\sigma \in S_n$ and some highest weight element $u^{(\bullet)}$.

In \cite[Thm.~5.11]{AS}, Assaf and Schilling showed that $\rfc(w)$ is a union of Demazure crystals.
If $\varphi$ is a flag, we denote $\varphi-\e_i:\ZZ_{>0}\to \ZZ_{\geq 0}$ to be the function such that $(\varphi-\e_i)(l)=\varphi(l)-\delta_{il}$, where $\delta_{il}$ is the Kronecker delta. 
The following theorem is the main ingredient to prove Theorem~\ref{theorem2}.
\begin{theorem}\label{thm 44}
    Suppose $\varphi$ is non-standard, and $i\in\ZZ_{>0}$ is minimal with $\varphi(i)>i$.
    Let $j=\varphi(i)-1$. 
    If $j\geq n$ then $\rfc_n(w,\varphi)=\rfc_n(w,\varphi-\e_i)$.
    Otherwise, $\rfc_n(w,\varphi)=\fkD^{\rf_n(w)}_j\rfc_n(w,{\varphi-\e_i})$. 
\end{theorem}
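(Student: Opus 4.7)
The proof splits into a trivial and a nontrivial regime. When $i > n$ or $j \geq n$, the flag condition on any factorization with $n$ parts is insensitive to the change $\varphi \mapsto \varphi - \e_i$: a letter $a$ in block $k \leq n$ requires $\varphi(a) \geq k$, and this is unaffected for $a \neq i$, while for $a = i$ we have either $i > n$ (no such letter is controlled by $\varphi(i)$ in our factorizations) or $(\varphi - \e_i)(i) = j \geq n \geq k$ automatic. So $\rfc_n(w,\varphi) = \rfc_n(w, \varphi - \e_i)$; this may also be deduced directly from Remark~\ref{remark 318}. Henceforth assume $i \leq n$ and $j < n$, so that $\fkD_j = \fkD_j^{\rf_n(w)}$ is defined.

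Two preliminary observations drive the rest. First, since $\varphi(k) = k$ for $k < i$ and $\varphi$ is weakly increasing with $\varphi(i) = j + 1$, the constraints of $\varphi$ and $\varphi - \e_i$ agree for every letter $a \neq i$, while for $a = i$ the allowed block index is at most $j+1$ under $\varphi$ and at most $j$ under $\varphi - \e_i$. Hence a $\varphi$-flagged factorization belongs to $\rfc_n(w, \varphi - \e_i)$ if and only if letter $i$ does not appear in its block $j+1$. Second, I plan to establish an \emph{unpairing lemma}: in any $\varphi$-flagged factorization $r^{(\bullet)}$, every letter of $r^{(j)} \cup r^{(j+1)}$ is $\geq i$, and hence a letter $i$ sitting in $r^{(j+1)}$ is always unpaired in the Morse--Schilling pairing. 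The size claim holds because any $a < i$ in block $j$ or $j+1$ would force $\varphi(a) \geq j$, while $\varphi(a) = a < i \leq j$; the unpairing claim holds because pairing such an $i \in r^{(j+1)}$ would require a strictly smaller letter $y \in r^{(j)}$, which by the size claim cannot exist.

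For the inclusion $\fkD_j \rfc_n(w, \varphi - \e_i) \subseteq \rfc_n(w, \varphi)$ it suffices to show that $f_j$ preserves $\varphi$-flaggedness, since $\rfc_n(w, \varphi - \e_i) \subseteq \rfc_n(w, \varphi)$ and every element of $\fkD_j \rfc_n(w, \varphi - \e_i)$ is reached from $\rfc_n(w, \varphi - \e_i)$ by a finite (necessarily nonzero) sequence of $f_j$'s. If $f_j$ moves the letter $b - t$ from block $j$ to block $j+1$, then $b - t$ belongs to $r^{(j)}$ of the source factorization, so the unpairing lemma gives $b - t \geq i$; monotonicity of $\varphi$ then yields $\varphi(b - t) \geq \varphi(i) = j + 1$, which is exactly the $\varphi$-admissibility condition for block $j+1$. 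The remaining blocks are unchanged apart from losing one letter, so $\varphi$-flaggedness is preserved.

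For the reverse inclusion, given $r^{(\bullet)} \in \rfc_n(w, \varphi)$ I would apply $e_j$ iteratively until letter $i$ disappears from block $j+1$. As long as $i \in r^{(j+1)}$ the unpairing lemma forces $i \in L_j$ so $e_j \neq 0$; Lemma~\ref{crystal and flag} keeps the result $\varphi$-flagged; and $|r^{(j+1)}|$ drops by one at each step. After at most $|r^{(j+1)}|$ steps, either $e_j$ returns zero (which forces $L_j = \varnothing$, and hence $i \notin r^{(j+1)}$) or $r^{(j+1)}$ is empty; either way the terminal factorization lies in $\rfc_n(w, \varphi - \e_i)$ by the first observation of the previous paragraph. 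The crux of the whole argument is the unpairing lemma: it translates the abstract definition of $i$ as the smallest index where $\varphi$ strictly exceeds the identity into a concrete structural statement about the Morse--Schilling operators at level $j = \varphi(i) - 1$, after which both inclusions reduce to short computations.
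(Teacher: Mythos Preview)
Your proof is correct and follows essentially the same approach as the paper: both establish the two inclusions by showing that $f_j$ preserves $\varphi$-flaggedness (via the fact that every letter in blocks $j$ and $j{+}1$ is at least $i$, so the letter $b-t$ added to block $j{+}1$ satisfies $\varphi(b-t)\geq j+1$) and that iterating $e_j$ on a $\varphi$-flagged factorization eventually removes $i$ from block $j{+}1$. Your packaging of the key observation as an ``unpairing lemma'' is a clean way to organize what the paper proves inline; one minor phrasing issue is that $f_j$ does not literally move $b-t$ out of block $j$ (it removes $b$ and adds $b-t$ to block $j{+}1$), but your actual claim that $b-t\in r^{(j)}$ is correct and is all that is needed.
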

\begin{proof}
    The flagged condition says that $i$ can only show up in the first $j+1=\varphi(i)$ blocks of any factorization in $\rfc_n(w,\varphi)$,  counting from right to left.
  However, $i$ can only show up in the first $j$ blocks for any factorization in $\rfc_n(w,\varphi-\e_i)$.
    Therefore, we have $\rfc_n(w,{\varphi-\e_i})\subseteq\rfc_n(w,{\varphi})$.
    When $j\geq n $, the desired equality holds by Remark~\ref{remark 318}. Assume $j<n$ from now on.

    Suppose $r^{(\bullet)}\in \rfc_n(w,\varphi)\setminus \rfc_n(w,{\varphi-\e_i})$.
   Then  $r^{(j+1)}$ must start with $i$,
    but $r^{(j)}$ does not contain any number smaller than $i$ since
    \[\varphi(i-1)=i-1<i\leq \varphi(i)-1=j.\]
    Therefore, we will have $i\in L_j(r^{(\bullet)})$ when we pair $r^{(j+1)}$ and $r^{(j)}$.
    The operator $e_j$ removes the largest element $a\in L_j(r^{(\bullet)})$ from $r^{(j+1)} $ and adds $a+s$ to $r^{(j)}$,
    where $s$ is the smallest non-negative integer such that $a+s+1\notin r^{(j+1)}$.
    If $a=i$, then block $j+1$ of $e_j(r^{(\bullet)})$ does not contain $i$ and $e_j(r^{(\bullet)})\in\rfc_n(w,{\varphi-\e_i})$.
    If $a\neq i$, then we have $a+s\geq a>i$ so $e_j(r^{(\bullet)})\in \rfc_n(w,\varphi)\setminus \rfc_n(w,{\varphi-\e_i})$ and $i\in L_j\(e_j(r^{(\bullet)})\)$.
   Since we can only apply $e_j$ a finite number of times before reaching zero,
   we must have $e_j^k(r^{(\bullet)}) \in \rfc_n(w,{\varphi-\e_i})$ for some $k\geq 0$, so $\rfc_n(w,\varphi)\subseteq\fkD_j \rfc_n(w,{\varphi-\e_i})$.

    Conversely, suppose $r^{(\bullet)}\in \fkD_j \rfc_n(w,{\varphi-\e_i})$.
    We want to show that $r^{(\bullet)}$ is $\varphi$-flagged.
    By definition, $e_j^{k}(r^{(\bullet)})\in \rfc_n(w,{\varphi-\e_i})$ for some $k\geq 0$,
    which is the same as saying $f_j^k (u^{(\bullet)})=r^{(\bullet)}$ for some $u^{(\bullet)}\in \rfc_n(w,\varphi-\e_i)$.
    Fix $v^{(\bullet)}\in \rfc_n(w,{\varphi})$ such that $f_j(v^{(\bullet)})\neq 0$.
    Since $\rfc_n(w,{\varphi-\e_i})\subseteq\rfc_n(w,{\varphi})$,
    it suffices to show that $f_j(v^{(\bullet)})\in \rfc_n(w,{\varphi})$.

    The factorization $f_j(v^{(\bullet)})$ is obtained from $v^{(\bullet)}$ by removing $b=\min R_j(v^{(\bullet)})$ from block $j$ and adding $b-t$ to block $j+1$,
    where $t$ is minimal such that $b-t-1\notin v^{(j)}$.
    Since $v^{(\bullet)}$ is $\varphi$-flagged, 
    $f_j(v^{(\bullet)})$ can fail to be $\varphi$-flagged only if $\varphi(b-t) < j + 1 = \varphi(i)$,
    which can only happen if $b-t < i$.
    However, as 
    $i-1$ can only show up in the first $i-1 = \varphi(i-1)$ blocks of $v^{(\bullet)}$, and 
   as  $j=\varphi(i)-1>i-1$, we must have $i-1 \notin v^{(j)}$ so $b-t-1 \geq i-1$. Thus 
      $b-t\geq i$ and $f_j(v^{(\bullet)})\in \rfc_n(w,{\varphi})$.
\end{proof}

For $n \geq b >a \geq 1$, let $\fkD_{b\downarrow a}:=\fkD_{b-1}^{\rf_n(w)} \fkD_{b-2}^{\rf_n(w)} \cdots\fkD_{a}^{\rf_n(w)}  $
and $\fkD_{a\downarrow a}:=\id$.

\begin{corollary}\label{corollary 4.5}
    Let $t_i=\min\{n,\varphi(i)\}$ for $1\leq i \leq n$.
    Then we have
    \[
        \rfc_n(w,\varphi)=\fkD_{t_1\downarrow 1}\fkD_{t_2\downarrow 2}\cdots\fkD_{t_n\downarrow n}\rfc_n(w).
    \]
\end{corollary}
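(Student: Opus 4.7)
The plan is to prove Corollary~\ref{corollary 4.5} by strong induction on the total excess
\[
|\varphi| := \sum_{i \geq 1}(\varphi(i) - i).
\]
By Remark~\ref{remark 318} we may first modify $\varphi$ outside $[n]$ so that it agrees with the standard flag at all sufficiently large indices, without changing $\rfc_n(w,\varphi)$ or the values $t_1,\dots,t_n$; this ensures $|\varphi|$ is finite. The base case $|\varphi|=0$ is immediate: $\varphi$ is the standard flag, each $t_i = i$, every factor $\fkD_{t_i\downarrow i}$ is the identity, and $\rfc_n(w,\varphi)=\rfc_n(w)$.

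For the inductive step, let $i := \min\{k : \varphi(k) > k\}$ and set $\psi := \varphi - \e_i$, so $\psi$ is a flag with $|\psi| = |\varphi|-1$, to which the induction hypothesis applies. Applying Theorem~\ref{thm 44} to $\varphi$ gives two cases. In the \emph{degenerate} case where $i>n$ or $\varphi(i) > n$, Theorem~\ref{thm 44} says $\rfc_n(w,\varphi)=\rfc_n(w,\psi)$, and one checks that $t_j^{\varphi} = t_j^{\psi}$ for every $j \in [n]$: if $i>n$ then no $t_j$ with $j\leq n$ is affected, while if $i\leq n < \varphi(i)$ then $t_i^{\varphi} = n = t_i^{\psi}$ since $\psi(i)=\varphi(i)-1\geq n$. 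Thus the operator product on the right-hand side is unchanged, and the induction hypothesis closes this case.

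In the main case $i \leq n$ and $\varphi(i) \leq n$, Theorem~\ref{thm 44} yields $\rfc_n(w,\varphi) = \fkD_{\varphi(i)-1}\rfc_n(w,\psi)$. Here $t_i^{\varphi} = \varphi(i) = t_i^{\psi} + 1$ while $t_j^{\varphi} = t_j^{\psi}$ for $j \neq i$. By minimality of $i$, $\varphi(j) = j$ for $j < i$, so the blocks $\fkD_{t_j^{\varphi} \downarrow j}$ and $\fkD_{t_j^{\psi} \downarrow j}$ are the identity for $j < i$; moreover the first nontrivial factor of the $\varphi$-product factors as $\fkD_{t_i^{\varphi} \downarrow i} = \fkD_{\varphi(i)-1}\fkD_{t_i^{\psi} \downarrow i}$. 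Hence the $\varphi$-product is obtained from the $\psi$-product by prepending a single $\fkD_{\varphi(i)-1}$, and invoking the induction hypothesis for $\psi$ yields the desired identity.

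The most delicate point is the bookkeeping in the degenerate case: one must notice that the truncation $t_i = \min\{n, \varphi(i)\}$ is precisely what aligns Theorem~\ref{thm 44}'s ``no-op'' with a ``no-op'' on the right-hand side of the formula. Beyond this observation, the argument is a routine iteration of Theorem~\ref{thm 44}, with the order of the Demazure operators in the product dictated by repeatedly decrementing $\varphi$ at its smallest non-standard index.
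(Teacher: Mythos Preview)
Your proof is correct and takes essentially the same approach as the paper's: both iterate Theorem~\ref{thm 44} to strip $\varphi$ down to the standard flag. The only organizational difference is that the paper first replaces $\varphi$ by the truncated flag $\overline\varphi(i)=\min\{n,\varphi(i)\}$ to dispose of all degenerate cases at once, whereas you handle them inside the induction; your version is more detailed but the core argument is the same.
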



\begin{proof}
    Let $\overline\varphi$ be the modified flag such that $\overline\varphi(i) = \min\{n,\varphi(i)\}$ for all $1\leq i \leq n$. Since $\rfc_n(w,\varphi) = \rfc_n(w,\overline\varphi)$, the result follows by repeatedly applying Theorem~\ref{thm 44}.
\end{proof}
\subsection{Crystal structures on key tableaux}\label{4.2}

Assaf and Schilling \cite[\S 3.2]{AS} defined a $\gl_n$-crystal on key tableaux, and they proved that $\SSKT(\alpha)$ is a Demazure crystal. Instead of reviewing their definition, we summarize some important properties.

First, the raising operator $e_i$ specified in \cite[Def.~3.7]{AS} can be applied to all key tableaux $T$ not just the semistandard ones. If $T$ is a key tableau, then $e_i(T)$ is obtained by changing some entries $i+1$ in the same row of $T$ to $i$ and then changing all $i$'s in the same columns as these entries to $i+1$'s. 
All entries $i+1$ changed by $e_i$ are in consecutive columns, and each of these entries has an $i$ above it except for the rightmost one; see the proof of  \cite[Prop.~3.8]{AS}.

\begin{proposition}
    If $T\in \SSKT(\alpha,\varphi)$
    then $e_i(T)\in \SSKT(\alpha,\varphi)\sqcup\{0\} $. 
\end{proposition}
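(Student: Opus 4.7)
The plan is to reduce the statement to Assaf–Schilling's original proof for the standard flag: their argument that $e_i(T)\in\SSKT(\alpha)\sqcup\{0\}$ uses only the definition of $e_i$ and the axioms of Definition~\ref{key tableau def}, and makes no use of any flag. So when $e_i(T)\neq 0$, the only thing left is to verify that $e_i(T)$ is $\varphi$-flagged.

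Let $(a,c)$ be the position in $T$ of $\colreading(T)_q$, where $q$ is maximal with $m_i(\colreading(T),q)=m_i(\colreading(T))$. Because row $a$ is weakly decreasing and holds $i+1$ at column $c$, the $i+1$-entries of row $a$ at columns $\geq c$ occupy a consecutive block $[c,c+k]$. The operator changes each of these to $i$ (which trivially preserves the flag) and then changes each $i$-entry in the columns of this block to $i+1$. If such a changed entry sits at position $(b,c_0)$ with $c_0\in[c,c+k]$, flag-preservation amounts to $\varphi(b)\geq i+1$. Since $\varphi$ is weakly increasing and $T_{a,c_0}=i+1\leq\varphi(a)$ gives $\varphi(a)\geq i+1$, it suffices to establish the structural claim that $b>a$.

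I prove this claim by induction on $c_0-c$. For the base case $c_0=c$, assume $T_{b,c}=i$ with $b<a$. Within column $c$, the top-to-bottom reverse-column reading encounters $(a,c)$ first at position $q$ and $(b,c)$ later at some $p'>q$; the intermediate entries lie in column $c$ at rows strictly between $b$ and $a$, and by column-distinctness none can equal $i$ or $i+1$. Therefore
\[
m_i(\rho,q)-m_i(\rho,p'+1)\;=\;(+1)+0+\dots+0+(-1)\;=\;0,
\]
so $m_i(\rho,p'+1)=m_i(\rho)$ with $p'+1>q$, contradicting the maximality of $q$. For the inductive step $c_0>c$, suppose $T_{b,c_0}=i$ with $b<a$. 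Then $T_{b,c_0-1}$ exists and $T_{b,c_0-1}\geq i$ by weak-decrease of row $b$; column-distinctness at column $c_0-1$ (where row $a$ still has $i+1$) rules out $T_{b,c_0-1}=i+1$. If $T_{b,c_0-1}=i$, the inductive hypothesis at column $c_0-1$ forces $b>a$, contradicting $b<a$. Otherwise $T_{b,c_0-1}\geq i+2>i+1=T_{a,c_0-1}$ with $b<a$, which activates axiom (c) of Definition~\ref{key tableau def} and forces $T_{b,c_0}>T_{a,c_0-1}=i+1$, in contradiction with $T_{b,c_0}=i$.

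The main obstacle is the structural claim $b>a$. Its base case leans on the maximality built into the choice of $q$, while the inductive step relies in an essential way on axiom (c) of key tableaux; once the claim is available, flag-preservation is immediate from monotonicity of $\varphi$.
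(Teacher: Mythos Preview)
Your proof is correct and follows essentially the same approach as the paper's: both reduce flag-preservation to the structural claim that every $i$ changed to $i+1$ lies in a row strictly above the row $a$ of the changed $i+1$'s, then invoke monotonicity of $\varphi$. The only difference is that the paper imports this structural claim directly from Assaf--Schilling's proof of \cite[Prop.~3.8]{AS}, whereas you reprove it from scratch via induction on the column index using the maximality of $q$ for the base case and axiom~(c) of Definition~\ref{key tableau def} for the step.
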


\begin{proof}
    
    Assume $e_i(T)\neq 0$ and those $i+1$ changed by $e_i$ are in row $r$.
    Then $i+1\leq \varphi(r)$ by the flagged condition.
    Replacing an $i+1$ by $i$ does not violate the flagged condition.
    For an entry $i$ in row $r'$ replaced by $i+1$, we have $r'>r$ by the observation in the previous paragraph.
    Hence, we have $i+1\leq \varphi(r)\leq \varphi(r')$, so $e_i(T)\in \SSKT(\alpha,\varphi)$.
\end{proof}

There is a unique way to define \defn{lowering operators} $f_i : \SSKT(\alpha,\varphi) \to \SSKT(\alpha,\varphi) \sqcup \{0\}$
such that $e_i(T) = U$ if and only if $T = f_i(U)$ for $T,U \in \SSKT(\alpha,\varphi)$; 
see \cite[Def.~3.10]{AS}, which also applies in our $\varphi$-flagged case.
By the previous proposition, we can view the set $\SSKT(\alpha,\varphi)$ as a $\gl_n$-crystal with raising operators $e_i$ for all $n$ with
$\varphi(\ell(\alpha))\leq n$. 
The bound on $n$ is necessary and sufficient for the weight function of $\SSKT(\alpha,\varphi)$ to take values in $\ZZ^n$. 
Let $\lambda$ be the partition rearrangement of $\alpha$ and $\lambda^T$ be the transpose of $\lambda$. Inspecting \cite[Def.~3.7]{AS} gives the following proposition:
\begin{proposition}\label{highest-prop}
    The unique highest weight element in $\SSKT(\alpha,\varphi)$ is the tableau in which column $i$ is filled by $1,2,\dots,\lambda^T_i$ from bottom to top. 
\end{proposition}
    

Assaf and Schilling showed that the crystal operators   
for $\rfc(w)$ and $\SSKT(\alpha)$
commute with the weak EG recording tableau $\hat{Q}(-)$ \cite[Thm.~5.10]{AS}.
Our goal is to show that this relation remains true for all flags.
Before we proceed to the proof, we need to define two shifting maps:
one on $\rfc(w,\varphi)$, and the other on $\SSKT(\alpha,\varphi)$.
These shifting maps commute with the crystal operators $e_i$.
Hence, the action of $e_i$ on $\rfc(w,\varphi)$ and $\SSKT(\alpha,\varphi)$ can be computed from
the action of $e_i$ in the case of the standard flag. 
 
Let $N$ be a positive integer.
If $w\in S_m$ 
then $1_N \times w \in S_{N+m}$ is the permutation fixing $[N]$ that has $w(i+N)=w(i)+N$ for all $i\in \ZZ_{>0}$.
We define $\shift_N:\rf_n(w)\to\rf_n(1_N\times w)$ by adding $N$ to each letter of every factor.
Since the flagged condition on reduced factorizations gives a lower bound for the letters in each component,
the shifting map $\shift_N$ sends $\rfc(w,\varphi)\hookrightarrow \rfc(1_N\times w,\varphi)$.

For any key tableau $T$,
we define $\shift_N(T)$ to be the key tableau obtained from $T$ by shifting up $N$ rows.
By the definition, $\shift_N$ preserves the flagged condition. 
Then $\shift_N$ sends $\SSKT(\alpha,\varphi)\hookrightarrow \SSKT(0^N\times \alpha,\varphi)$, 
where $0^N\times \alpha$ is the weak composition by adding $N$ 0's at the beginning of $\alpha$.
Now, we are ready to prove the $\varphi$-flagged analog of \cite[Thm.~5.10]{AS}.

\begin{proposition}\label{cor 4.6}
    Given $r^{(\bullet)}\in\rfc(w,\varphi)$ and any $i>0$, if $e_i(r^{(\bullet)})\neq 0$, then  $\hat{P}\left(e_i(r^{(\bullet)})\right)=\hat{P}(r^{(\bullet)})$ and $\hat Q(e_i(r^{(\bullet)}))=e_i\big(\hat Q(r^{(\bullet)})\big)$.
\end{proposition}

\begin{proof}

    By \cite[Thm.~4.11]{MS}, $e_i$ preserves the Coxeter--Knuth equivalence relation. Thus, we have 
    $\hat{P}\left(e_i(r^{(\bullet)})\right)=\hat{P}(r^{(\bullet)})$.
    Now let $N$ be a large positive integer such that the images of $\shift_N$ lie in the relevant subsets corresponding to the standard flag. Let $\hat Q(0)=0$, $\shift_N(0)=0$ and consider the following diagram:
    \[
        \begin{tikzcd}[row sep=1em, column sep=.24em]
            \rfc(w,\varphi) \arrow[dd,hook,"\shift_N"] \arrow[rr,"\hat Q"] \arrow[rd,"e_i"]&  & \bigsqcup_\alpha \SSKT(\alpha,\varphi) \arrow[rd,"e_i"]\arrow[ dd,hook,"\shift_N"near start]\\
            & \rfc(w,\varphi)\sqcup\{0\} \arrow[rr,"\hat Q\quad",crossing over] && \bigsqcup_\alpha \SSKT(\alpha,\varphi) \sqcup\{0\} \arrow[dd,hook,"\shift_N"] \\
            \rfc(1_N \times w) \arrow[rr,"\hat Q" near start] \arrow[rd,"e_i"] && \bigsqcup_{\alpha}\SSKT(0^N\times \alpha) \arrow[rd,"e_i"]\\
            & \rfc(1_N \times w) \sqcup\{0\}\arrow[rr,"\hat Q"] \arrow[from=uu, crossing over, "\shift_N" near start,hook]  && \bigsqcup_{\alpha}\SSKT(0^N\times \alpha) \sqcup\{0\}
        \end{tikzcd}.
    \]
    The proposition is equivalent to saying that the top face of this diagram commutes.
    The bottom face commutes by \cite[Thm.~5.10]{AS}.
    Crystal operators $e_i$ for both reduced factorizations and key tableaux only depend on the relative order of the entries, so $\shift_N$ commutes with crystal operators $e_i$. 
    Also, $\shift_N$ commutes with $\hat Q(-)$ 
because $\hat{P}(\shift_N(r^{(\bullet)}))$ is obtained from $\hat{P}(r^{(\bullet)})$ by shifting all boxes up by $N$ rows and adding $N$ to all entries.
Hence, all vertical faces commute.
    Since the rightmost vertical arrow is injective, the top face must commute.
\end{proof}

The following theorem is a generalization of \cite[Thm.~3.14]{AS}, which asserts that $\SSKT(\alpha)$ is a Demazure crystal with character $\kappa_\alpha$.
    For positive integers $t>s$ define $\pi_{t\downarrow s}:=\pi_{t-1}\pi_{t-2}\cdots\pi_{s}$
and  let $\pi_{t\downarrow t}:=\id$. 

\begin{theorem}\label{demazure crystal sskt}
    Suppose $\alpha$ is a weak composition with length $k$ and $\varphi$ is any flag. 
    Then
    $\SSKT(\alpha,\varphi)$ is a Demazure crystal (of type $\gl_n$ for any $n \geq \varphi(k)$) with character
    $\kappa_{(\alpha,\varphi)}=\pi_{\varphi(1)\downarrow 1}\pi_{\varphi(2)\downarrow 2}\cdots \pi_{\varphi(k)\downarrow k}(\kappa_{\alpha})$.
\end{theorem}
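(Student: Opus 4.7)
The plan is to transport the Demazure crystal structure from $\rfc_n(w,\varphi)$ to $\SSKT(\alpha,\varphi)$ via the weak EG recording map $\hat Q$. I would fix $n \geq \varphi(k)$ and choose any $w \in S_\infty$ with $\YR_\alpha(w) \neq \varnothing$; such a $w$ exists because the standard-flag case \cite[Thm.~5.11]{AS} decomposes $\rfc_n(w) = \bigsqcup_\sigma \cB_\sigma$ into Demazure subcrystals indexed by Yamanouchi reduced words $\sigma$, each fiber $\cB_\sigma$ of $\hat P$ over $\sigma$ is crystal-isomorphic to $\SSKT(\des(\sigma))$, and every weak composition arises as $\des(\sigma)$ for some Yamanouchi $\sigma$ (e.g.\ by taking $\sigma = \row(\hat P(\rho))$ for any $\rho$ whose recording tableau is the highest-weight key tableau of shape $\alpha$). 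The three key ingredients are Corollary~\ref{corollary 4.5}, this standard-flag decomposition, and Theorem~\ref{cor 4.6}.

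Since $\hat P$ is constant on crystal connected components (by Theorem~\ref{cor 4.6}) and each Demazure operator enlarges a subset only within its connected components, the operators $\fkD_{t_1\downarrow 1}\dots \fkD_{t_n\downarrow n}$ from Corollary~\ref{corollary 4.5} (with $t_i = \min\{n,\varphi(i)\}$) preserve the $\hat P$-fiber decomposition. Applying them to $\rfc_n(w) = \bigsqcup_\sigma \cB_\sigma$ I would obtain
\[ \rfc_n(w,\varphi) = \bigsqcup_\sigma \cB_\sigma(\varphi), \qquad \cB_\sigma(\varphi) := \fkD_{t_1\downarrow 1}\dots \fkD_{t_n\downarrow n} \cB_\sigma. \]
Each $\cB_\sigma(\varphi)$ is again a Demazure subcrystal of $\rf_n(w)$, because the operators $\fkD_i$ on subsets satisfy $\fkD_i^2 = \fkD_i$ together with the braid relations of the $0$-Hecke monoid, so any iterated composition applied to a Demazure subcrystal $\fkD_\tau\{u\}$ collapses to $\fkD_{\tau'}\{u\}$ for a suitable $\tau'$. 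Theorem~\ref{cor 4.6} then tells me that $\hat Q$ restricts to a weight-preserving bijection $\cB_\sigma(\varphi)\to \SSKT(\alpha,\varphi)$ that intertwines every crystal operator, hence a crystal isomorphism; this proves $\SSKT(\alpha,\varphi)$ is a Demazure crystal.

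For the character, the Demazure character formula gives
\[ \ch(\SSKT(\alpha,\varphi)) = \ch(\cB_\sigma(\varphi)) = \pi_{t_1\downarrow 1}\dots \pi_{t_n\downarrow n}\kappa_\alpha. \]
For $1\leq i \leq k$ we have $t_i = \varphi(i)$ since $\varphi(i)\leq \varphi(k)\leq n$. For $i>k$, the factor $\pi_{t_i\downarrow i}$ involves only operators $\pi_j$ with $j \geq i > k$, and $\kappa_\alpha \in \ZZ[x_1,\dots,x_k]$ is symmetric in $x_j$ and $x_{j+1}$ for any such $j$, so each such $\pi_j$ fixes $\kappa_\alpha$ and these outer factors act trivially. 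The expression therefore collapses to $\pi_{\varphi(1)\downarrow 1}\dots \pi_{\varphi(k)\downarrow k}\kappa_\alpha$, matching the claimed formula. The main technical point I expect is verifying that the sequence of Demazure operators respects the $\hat P$-fiber decomposition; this reduces immediately to Theorem~\ref{cor 4.6}, after which both the crystal statement and the character simplification are routine consequences of the Coxeter-type relations satisfied by $\{\fkD_i\}$ and the $\pi_j$-invariance of polynomials symmetric in $x_j$ and $x_{j+1}$.
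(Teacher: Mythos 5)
Your proposal is essentially the paper's proof: both transport the Demazure structure from the $\hat P$-fibers of $\rfc_n(w,\varphi)$ to $\SSKT(\alpha,\varphi)$ via $\hat Q$, using Corollary~\ref{corollary 4.5} to express the flagged fiber as $\fkD_{t_1\downarrow 1}\cdots\fkD_{t_n\downarrow n}$ applied to the unflagged fiber, Theorem~\ref{cor 4.6} to see that $\hat P$ is constant on connected components (so these Demazure operators respect the fiber decomposition), the $0$-Hecke relations on $\fkD_i$ to conclude the image is again a Demazure subcrystal, and the Demazure character formula plus \eqref{equation 1} to simplify the character.

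The one soft spot is your justification that $\YR_\alpha(w)\neq\varnothing$ for some $w$. Your parenthetical ``take $\sigma=\row(\hat P(\rho))$ for any $\rho$ whose recording tableau is the highest-weight key tableau of shape $\alpha$'' presupposes the existence of such a $\rho$, which is exactly the point in question, so as written it is circular. The paper instead observes that since $\{\kappa_\beta\}$ and the Schubert polynomials $\{\fk{S}_w\}$ are both $\ZZ$-bases of $\ZZ[x_1,x_2,\dots]$, the polynomial $\kappa_\alpha$ must appear with nonzero coefficient in the key expansion of $\fk{S}_{w^{-1}}$ for some $w$; the standard-flag case of Theorem~\ref{flagged bijection} then forces $\YR_\alpha(w)\neq\varnothing$. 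If you replace your parenthetical with that basis argument, your proof matches the paper's.
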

    
\begin{proof}
   
    By \cite[Thm.~3.14]{AS}, $\SSKT(\alpha)$ is a Demazure crystal. Suppose  $\varphi(k)\leq n$  and $\SSKT(\alpha)$ can be embedded into $\rf_n(w)$ for some $w\in S_\infty$; hence, $\YR_\alpha(w)\neq \varnothing$.
   Choose any $T \in \YR_\alpha(w)$. Define $\cC$ to be the set of $\rho^{(\bullet)} \in \rfc(w)$
   with $\hat{P}(\rho^{(\bullet)}) = T$. Let $\cC'$ be the set of  $\rho^{(\bullet)} \in \rfc(w,\varphi)$
   with $\hat{P}(\rho^{(\bullet)}) = T$.
   By Proposition~\ref{flagged bijection} and Proposition~\ref{cor 4.6}, the map 
   $\hat Q(-)$
   is a crystal isomorphism $\SSKT(\alpha)\cong \cC$ and $\SSKT(\alpha,\varphi)\cong \cC'$.
   Therefore $\cC$ is a Demazure crystal embedded in $\rf_n(w)$, so $\cC=\fkD_{\sigma}^{\rf_n(w)}\{u^{(\bullet)}\}$ for some $\sigma \in S_n$ and some highest weight element $u^{(\bullet)}$. 
   
 Let $\fkD_i = \fkD_i^{\rf_n(w)}$. 
    Since the $e_i$ operators preserve the weak EG insertion tableau,
    Corollary~\ref{corollary 4.5}
    implies that $\cC'=\fkD_{t_1\downarrow 1}\fkD_{t_2\downarrow 2}\cdots \fkD_{t_n\downarrow n}\cC$
   where $t_i=\min\{n,\varphi(i)\}$. 
      Hence, $\cC'$ is also a Demazure crystal, and
         the Demazure character formula \cite[Thm.~13.7]{BS} implies that
   \be\label{ch-eq}\ch(\SSKT(\alpha,\varphi)) = \ch(\cC')=\pi_{t_1\downarrow 1} \cdots \pi_{t_n\downarrow n}\ch(\cC)
   =\pi_{t_1\downarrow 1}\cdots \pi_{t_n\downarrow n}(\kappa_{\alpha}).\ee
   Since $\alpha$ has length $k$, $\pi_{t_{k+1}\downarrow k+1}\cdots\pi_{t_n\downarrow n}(\kappa_\alpha)=\kappa_\alpha$ by properties in \S\ref{2.1}. Also, $t_i=\varphi(i)$ for all $1\leq i\leq k$ since $\varphi(i)\leq \varphi(k)\leq n$. Therefore, \eqref{ch-eq} reduces to $\ch(\SSKT(\alpha,\varphi))=\pi_{\varphi(1)\downarrow 1}\cdots \pi_{\varphi(k)\downarrow n}(\kappa_{\alpha})$.
\end{proof}

As an application of the theorem, we derive a recurrence for $\kappa_{(\alpha,\varphi)}$.

\begin{theorem}\label{recursion}
Let $\varphi$ be a flag.
    If $\varphi$ is strictly increasing, then $\kappa_{(\alpha,\varphi)}=\kappa_\beta$ where $\beta_j=
        \alpha_{i}$ if $\varphi(i)=j$ for some $i$ or $0$ otherwise. If $\varphi$ is not strictly increasing and $i$ is the smallest index with $\varphi(i)=\varphi(i+1)$, then 
        \begin{equation}\label{equation 2}
            \kappa_{(\alpha,\varphi)}=\begin{cases}
                \kappa_{(\alpha,\varphi-\mathbf{e}_i)}          & \text{ if }\alpha_i\leq \alpha_{i+1}, \\
                \kappa_{(\alpha\cdot s_i,\varphi-\mathbf{e}_i)} & \text{ if }\alpha_i>\alpha_{i+1}.
            \end{cases}
        \end{equation}
\end{theorem}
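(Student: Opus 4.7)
The plan is to use the character formula
\[\kappa_{(\alpha,\varphi)}=\pi_{\varphi(1)\downarrow 1}\cdots\pi_{\varphi(k)\downarrow k}(\kappa_\alpha)\]
from Theorem~\ref{demazure crystal sskt} (where $k=\ell(\alpha)$), and to reduce both cases to bookkeeping with the rule~\eqref{equation 1}. In the strictly increasing case I would induct on $j$ from $k$ down to $1$: after applying $\pi_{\varphi(j+1)\downarrow j+1}\cdots\pi_{\varphi(k)\downarrow k}$, the result should be $\kappa_{\beta^{(j+1)}}$ where $\alpha_l$ sits at position $\varphi(l)$ for every $l>j$ and all other positions beyond $j$ are zero. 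The strict inequality $\varphi(j)<\varphi(j+1)$ guarantees that positions $j+1,\ldots,\varphi(j)$ of $\beta^{(j+1)}$ are all zero, so applying $\pi_{\varphi(j)\downarrow j}$ bubbles $\alpha_j$ cleanly from position $j$ to position $\varphi(j)$ via iterated use of~\eqref{equation 1}, completing the induction step and yielding $\kappa_\beta$.

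For the non-strictly-increasing case with $\varphi(i)=\varphi(i+1)=m$, set $\varphi':=\varphi-\mathbf{e}_i$; this is still a valid flag by minimality of $i$. Writing $P:=\pi_{\varphi(1)\downarrow 1}\cdots\pi_{\varphi(i-1)\downarrow i-1}$ and $R:=\pi_{\varphi(i+2)\downarrow i+2}\cdots\pi_{\varphi(k)\downarrow k}$, the character formula gives
\begin{align*}
\kappa_{(\alpha,\varphi)}&=P\cdot\pi_{m\downarrow i}\pi_{m\downarrow i+1}\cdot R(\kappa_\alpha),\\
\kappa_{(\alpha,\varphi')}&=P\cdot\pi_{m-1\downarrow i}\pi_{m\downarrow i+1}\cdot R(\kappa_\alpha).
\end{align*}
Setting $\kappa_\gamma:=R(\kappa_\alpha)$, the fact that $R$ involves only indices $\geq i+2$ implies $\gamma_i=\alpha_i$ and $\gamma_{i+1}=\alpha_{i+1}$, and moreover $R(\kappa_{\alpha\cdot s_i})=\kappa_{\gamma\cdot s_i}$.

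The key step is the Demazure operator identity
\[\pi_{m\downarrow i}\pi_{m\downarrow i+1}\;=\;\pi_{m-1\downarrow i}\pi_{m\downarrow i+1}\pi_i,\]
which I would prove as follows. Both sides are words in the $\pi_j$'s of length $2(m-i)-1$, so it suffices to check that the corresponding products of simple transpositions define the same permutation; a direct calculation shows each sends $(i,i+1,i+2,\ldots,m)$ to $(m,m-1,i,i+1,\ldots,m-2)$ on the window $[i,m]$ and fixes everything outside. Since $\pi_w$ depends only on $w$ and not on the choice of reduced word, the identity follows. Applying it to $\kappa_\gamma$ and invoking~\eqref{equation 1} on $\pi_i(\kappa_\gamma)$ settles both subcases: when $\alpha_i\leq\alpha_{i+1}$, $\pi_i(\kappa_\gamma)=\kappa_\gamma$, so $\kappa_{(\alpha,\varphi)}=\kappa_{(\alpha,\varphi')}$; when $\alpha_i>\alpha_{i+1}$, $\pi_i(\kappa_\gamma)=\kappa_{\gamma\cdot s_i}=R(\kappa_{\alpha\cdot s_i})$, giving $\kappa_{(\alpha,\varphi)}=\kappa_{(\alpha\cdot s_i,\varphi')}$. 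The only real obstacle is verifying the operator identity above, which amounts to an elementary but careful reduced-word calculation on the window $[i,m]$.
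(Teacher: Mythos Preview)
Your proposal is correct and follows essentially the same route as the paper's proof: both start from the character formula of Theorem~\ref{demazure crystal sskt}, both reduce the strictly increasing case to repeated applications of~\eqref{equation 1}, and both hinge on the operator identity $\pi_{m\downarrow i}\pi_{m\downarrow(i+1)}=\pi_{(m-1)\downarrow i}\pi_{m\downarrow(i+1)}\pi_i$, justified (as you do) by checking that the underlying words in simple reflections are reduced expressions for the same permutation.

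The only stylistic difference is in how the trailing $\pi_i$ is handled. The paper observes that every subscript appearing in $R=\pi_{\varphi(i+2)\downarrow(i+2)}\cdots$ is at least $i+2$, so $\pi_i$ commutes past $R$ and can be applied directly to $\kappa_\alpha$; this avoids naming the intermediate composition $\gamma$ and the auxiliary claim $R(\kappa_{\alpha\cdot s_i})=\kappa_{\gamma\cdot s_i}$. Your route via $\gamma$ is perfectly valid, just slightly longer. One small point worth tightening: when you invoke the character formula with $k=\ell(\alpha)$, the case $i=k$ (so $\alpha_{i+1}=0$) leaves no factor $\pi_{\varphi(i+1)\downarrow(i+1)}$ in your decomposition; the cleanest fix is to note at the outset that the product in Theorem~\ref{demazure crystal sskt} may be extended harmlessly to any upper index $\geq\ell(\alpha)$, since $\pi_{\varphi(l)\downarrow l}$ fixes $\kappa_\alpha$ for $l>\ell(\alpha)$ by~\eqref{equation 1}. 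The paper sidesteps this by writing the product up to a large $n$ rather than up to $\ell(\alpha)$.
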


\begin{proof}
    By Theorem~\ref{demazure crystal sskt}, we have $\kappa_{(\alpha,\varphi)}=\pi_{\varphi(1) \downarrow 1}\cdots \pi_{\varphi(n)\downarrow n}\kappa_{\alpha}$.
    If $\varphi$ is strictly increasing, then $\kappa_{(\alpha,\varphi)}$ expands into $\kappa_\beta$ directly using the recursive property in \S\ref{2.1}.

    Assume $\varphi$ is not strictly increasing and 
    $i$ is the smallest integer such that $\varphi(i)=\varphi(i+1)=N$.
 Notice that 
    \[\begin{aligned} \pi_{N\downarrow i}\pi_{N\downarrow(i+1)} &= (\pi_{N-1} \cdots \pi_{i+1} \pi_i) (\pi _{N-1}\cdots \pi_{i+2}\pi_{i+1}) \\&=
     (\pi_{N-2} \cdots \pi_{i+1} \pi_i)  (\pi _{N-1}\cdots \pi_{i+2}\pi_{i+1})\pi_i=
    \pi_{(N-1)\downarrow i}\pi_{N\downarrow(i+1)} \pi _i\end{aligned}\]
    since both expressions give reduced words for the same permutation when every ``$\pi$'' is replaced by ``$s$''.
Substituting this identity and noting that 
 all subscripts in $\pi_{\varphi(i+2)\downarrow(i+2)} \cdots \pi_{\varphi(n)\downarrow n}$ are at least $i+2$,
   we deduce that
    \[\kappa_{(\alpha,\varphi)}=\pi_{\varphi(1) \downarrow 1}\cdots \pi_{(\varphi(i)-1)\downarrow i}\pi_{\varphi(i+1)\downarrow(i+1)}\cdots \pi_{\varphi(n)\downarrow n}(\pi_i\kappa_{\alpha})\]
    and this becomes the desired identity by the recursive property in \S\ref{2.1}.
\end{proof}
 
 Continue to assume that    $n\geq \varphi(\ell(\alpha))$, so that $\SSKT(\alpha,\varphi)$ is a $\gl_n$-crystal.
 
\begin{corollary}\label{thm 4.15}
    Suppose 
    $\varphi$ is a non-standard flag with $i\leq n$ the smallest positive integer such that $\varphi(i)=\varphi(i+1)$. 
Then the crystal $\SSKT(\alpha,\varphi)$ is isomorphic to $\SSKT(\alpha\cdot s_i,\varphi-\mathbf{e}_i)$ if $\alpha_i>\alpha_{i+1}$ or to $\SSKT(\alpha,\varphi-\mathbf{e}_i)$ if $\alpha_i\leq \alpha_{i+1}$.
\end{corollary}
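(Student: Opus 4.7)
The plan is to deduce this corollary directly from three inputs already established in the paper: Theorem~\ref{demazure crystal sskt}, which identifies each $\SSKT(\alpha,\varphi)$ as a Demazure crystal; Theorem~\ref{recursion}, which gives the flagged key polynomial identity $\kappa_{(\alpha,\varphi)} = \kappa_{(\alpha,\varphi-\mathbf{e}_i)}$ or $\kappa_{(\alpha\cdot s_i,\varphi-\mathbf{e}_i)}$; and the principle recalled in \S\ref{s4.2} that two Demazure crystals are isomorphic if and only if they share the same character.

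First I would verify that $\varphi - \mathbf{e}_i$ is itself a genuine flag so that the right-hand side of the claimed isomorphism is well-defined. The minimality of $i$ among indices satisfying $\varphi(i)=\varphi(i+1)$ forces either $i=1$ or $\varphi(i-1)<\varphi(i)$, from which $(\varphi-\mathbf{e}_i)(i-1)\leq(\varphi-\mathbf{e}_i)(i)$ follows, and $(\varphi-\mathbf{e}_i)(i)=\varphi(i)-1\leq\varphi(i+1)=(\varphi-\mathbf{e}_i)(i+1)$ is automatic. Also $\varphi(i)=\varphi(i+1)\geq i+1$, so $(\varphi-\mathbf{e}_i)(i)\geq i$, while the bound $(\varphi-\mathbf{e}_i)(j)\geq j$ for $j\neq i$ is inherited from $\varphi$.

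Next I would apply Theorem~\ref{demazure crystal sskt} to each of the three sets $\SSKT(\alpha,\varphi)$, $\SSKT(\alpha,\varphi-\mathbf{e}_i)$ and $\SSKT(\alpha\cdot s_i,\varphi-\mathbf{e}_i)$, obtaining that each is a Demazure $\gl_n$-crystal whose character equals the corresponding flagged key polynomial. Compatibility of the ambient rank $n$ with $\varphi-\mathbf{e}_i$ follows from the pointwise bound $\varphi-\mathbf{e}_i\leq\varphi$; the one subtle case is the boundary $i=\ell(\alpha)$ with $\alpha_i>\alpha_{i+1}=0$, where $\ell(\alpha\cdot s_i)=i+1$, but then $(\varphi-\mathbf{e}_i)(i+1)=\varphi(i+1)=\varphi(i)\leq n$ because $\varphi(i)=\varphi(i+1)$ and $i\leq\ell(\alpha)$.

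Finally, Theorem~\ref{recursion} supplies the equalities of characters in each of the two cases $\alpha_i\leq\alpha_{i+1}$ and $\alpha_i>\alpha_{i+1}$, and matching characters of Demazure crystals yields the stated isomorphism. There is no real obstacle: all the substantive work has been done in Theorem~\ref{demazure crystal sskt} and Theorem~\ref{recursion}, and the argument is essentially a three-line assembly. The only thing to watch is bookkeeping around the validity of $\varphi-\mathbf{e}_i$ as a flag and the matching of the ambient $\gl_n$ rank, both of which are handled directly by the minimality of $i$.
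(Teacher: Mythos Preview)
Your proposal is correct and follows exactly the same route as the paper's proof: invoke Theorem~\ref{demazure crystal sskt} to identify all three sets as Demazure crystals, use Theorem~\ref{recursion} to match their characters, and then appeal to the fact from \S\ref{s4.2} that Demazure crystals with equal characters are isomorphic. The paper's proof is the two-line version of your argument; your additional verification that $\varphi-\mathbf{e}_i$ is a valid flag and that the ambient rank $n$ is compatible are careful details the paper leaves implicit.
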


\begin{proof}
    By Theorems~\ref{demazure crystal sskt} and  \ref{recursion}, the relevant crystals are Demazure crystals with the same character. Hence, they are isomorphic.
\end{proof}

Reiner and Shimozono defined \defn{$\varphi$-flagged key polynomials} associated to $\alpha$ as $\sum_{u^{\bullet}\in\sW(\alpha,\varphi)}x^{\weight(u^{\bullet})}$ for a certain set $\sW(\alpha,\varphi)$; see \cite{RS}. As a final application, we can now prove Theorem~\ref{thm 1.4} from the introduction.


\begin{proof}[Proof of Theorem~\ref{thm 1.4}]

If $\varphi$ is the standard flag, $\kappa_\alpha=\sum_{u^{\bullet}\in\sW(\alpha,\varphi)}x^{\weight(u^{\bullet})}$ \cite{LS2}. 
    In the proof of {\cite[Thm.~21]{RS}},
    Reiner and Shimozono observed the following list of recursive properties of $\sW(\alpha,\varphi)$.
    When $\varphi$ is strictly increasing,
    we have $\sW(\alpha,\varphi)=\sW(\beta)$,
    where $\beta_{j}=\alpha_i$ if $j=\varphi(i)$ and $\beta_j=0$ otherwise.
    When $\varphi$ is not strictly increasing,
    assume $i\in\ZZ_{>0}$ is minimal such that $\varphi(i)=\varphi(i+1)$.
    If $\alpha_{i}\leq \alpha_{i+1}$ then $\sW(\alpha,\varphi)=\sW(\alpha,\varphi-\mathbf{e}_i)$.
    If $\alpha_{i}>\alpha_{i+1}$ 
    then there is a bijection between $\sW(\alpha,\varphi)$ and $\sW(\alpha\cdot s_i,\varphi)=\sW(\alpha\cdot s_i,\varphi-\mathbf{e}_i)$.
    Hence, $\sW(\alpha,\varphi)$ satisfies the same recursive relations involving the flag $\varphi$ as $\SSKT{(\alpha,\varphi)}$ in Corollary~\ref{thm 4.15}.
\end{proof}



\section*{Acknowledgements}
I am grateful to my advisor, Eric Marberg, for invaluable advice and discussions. I also thank Sami Assaf for several helpful conversations.

\printbibliography

\end{document}